\newcommand{\ignore}[1]{}
\def\Bin{{\rm Bin}}
\def\KS2{{\cal KS}2}
\newcommand{\brac}[1]{\left(#1\right)}
\newcommand{\bfrac}[2]{\brac{\frac{#1}{#2}}}
\newcommand{\set}[1]{\left\{#1\right\}}
\def\cD{{\cal D}}
\def\cB{{\cal B}}
\def\cE{{\cal E}}
\def\e{\varepsilon}
\def\cG{{\cal G}}
\def\cX{{\cal X}}
\def\cE{{\cal E}}
\def\cF{{\cal F}}
\def\a{\alpha}
\def\b{\beta}
\def\d{\delta}
\def\D{\Delta}
\def\g{\gamma}
\def\G{\Gamma}
\def\k{\kappa}
\def\z{\zeta}
\def\l{\lambda}
\def\n{\nu}
\def\r{\rho}
\def\S{\Sigma}
\def\t{\tau}
\def\OM{\Omega}
\def\cG{{\cal G}}
\def\Pr{\mbox{{\bf Pr}}}
\def\whp{{\bf whp}}
\def\from{\leftarrow}
\newtheorem{theorem}{Theorem}
\newtheorem{lemma}[theorem]{Lemma}
\newtheorem{claim}[theorem]{Claim}
\newtheorem*{conj}{Conjecture}
\newcommand{\proofend}{\hspace*{\fill}\mbox{$\Box$}}
\newcommand{\card}[1]{\left|#1\right|}
\newcommand{\rdup}[1]{\left\lceil #1 \right\rceil }
\newcommand{\pee}{{\mathcal P}(F)}
\newcommand{\eee}{{\rm END}}
\newcommand{\fee}{{\mathcal F}}
\begin{document}
\title{Hamilton cycles in 3-out}
\author{Tom Bohman\thanks{Supported in part by NSF grants DMS-0401147 and DMS-0701183.}
 and Alan Frieze\thanks{Supported in part by NSF grant CCF-0502793.}
\\Department of Mathematical Sciences,\\ Carnegie Mellon
University,\\ Pittsburgh PA 15213,\\ USA. }

\maketitle

\begin{abstract}
Let \( G_{\rm 3-out} \) denote the random graph on vertex set \([n] \) in which 
each vertex chooses 3 neighbors uniformly at random.  Note that \( G_{\rm 3-out} \)
has minimum degree 3 and average degree 6.  We prove that the probability that
\( G_{\rm 3-out} \) is Hamiltonian goes to 1 as \(n\) tends to infinity.
\end{abstract}

\section{Introduction}
One of the natural questions for a sparse random graph model is 
whether or not a random instance is Hamiltonian \whp. 
The Hamiltonicity threshold for the
basic
models $G_{n,m}$ and $G_{n,p}$ was
established quite precisely by Koml\'os and Szemer\'edi \cite{KS}. See
Bollob\'as \cite{BoHam}, Ajtai, Koml\'os and Szemer\'edi
\cite{AKS} and Bollob\'as and Frieze \cite{BF1} for refinements. 
If $m=\frac{n}{2}(\log n+\log\log n+c)$
then
$$\lim_{n\to\infty}\Pr(G_{n,m}\mbox{ is
Hamiltonian})=\lim_{n\to\infty}\Pr(\d(G_{n,m})\geq 2)=e^{-e^{-c}}$$
where $\d$ denotes minimum degree.
The lesson here is
that the minimum degree seems to be the most important factor in
determining the likelihood of Hamiltonicity in a random graph. This 
naturally leads to
the consideration of models where the minimum degree condition is
automatically satisfied. If we simply condition on
$G_{n,m}$ having minimum degree 2 then about $\frac{1}{6}n\log n$
random edges are needed (see Bollob\'as, Fenner and Frieze
\cite{BFF1}).  The barrier to Hamiltonicity in this model 
is the existence of three vertices of degree 2 that have a common
neighbor.  So, we expect even sparser random graph models with minimum
degree at least three to have Hamilton cycles \whp\ (note that if
we have a linear number of vertices of degree 2 it may be very likely
that there are three vertices of degree 2 that have a common neighbor).
We focus our attention here on random graphs with only a linear 
number of edges and minimum degree 3.
%. In this case there are likely to be a linear number of vertices
%of degree 2 and then we are likely to find 3 vertices of degree 2
%sharing a common neighbor, ruling out Hamiltonicity. Thus for a
%linear number of edges we are forced(?) to have minimum degree at
%least 3. For example, it has been shown by 
Bollob\'as, Cooper, Fenner and
Frieze \cite{BFF1} showed that if we condition on $G_{n,m}$ having minimum
degree 3 then $m = 128 n$ random edges suffice to give a Hamilton
cycle \whp.  We believe that this result holds so long as the average degree
is at least 3:
\begin{conj}
For any $c \ge 3/2$ the random
graph \( G_{n,cn} \) conditioned on minimum degree 3 has a Hamilton cycle
with high probability.
\end{conj}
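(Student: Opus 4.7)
The plan is to adapt the rotation--sprinkling toolkit used by Bollob\'as, Cooper, Fenner and Frieze in~\cite{BFF1} at $c=128$, and push the constant down to the natural barrier $c=3/2$. First I would realise the conditioned distribution by drawing the degree sequence $\bd=(d_1,\ldots,d_n)$ from its marginal (essentially a Poisson of mean $2c$ conditioned on being $\ge 3$), and then drawing a uniform simple graph with that sequence via a configuration-model pairing. The edges would be partitioned into a skeleton $G_1$ containing $(1-\ep)cn$ edges and a sprinkling reservoir $G_2$ of $\ep cn$ edges that is held out until the final closing argument.

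\emph{Expansion.} I would show that \whp\ $G_1$ is a P\'osa-type expander: $|N_{G_1}(S)|\ge 2|S|$ for every $S\subseteq [n]$ with $|S|\le n/\log n$. The worst case is sets $S$ dominated by vertices of degree exactly~$3$, and a first-moment calculation over all candidate pairs $(S,N(S))$ should rule these out whenever $c$ is bounded away from $3/2$. At $c=3/2$ the graph is essentially $3$-regular, and expansion must instead be deduced from known results for random $3$-regular graphs, for example contiguity with the uniform $3$-regular model combined with the eigenvalue bound of Friedman.

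\emph{Rotations and closing.} Invoking P\'osa's rotation lemma, the expansion in the previous step guarantees that for any longest path $P$ in $G_1$ there are $\OM(n^2)$ \emph{boosters} --- non-edges whose addition either extends $P$ or produces a longer cycle. Each booster is present in $G_2$ with probability $\OM(1/n)$, essentially independently of $G_1$, so \whp\ at least one booster is realised. Iterating $O(\log n)$ times, each round consuming a $(\log n)^{-1}$ fraction of the reservoir, yields a Hamilton cycle.

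\emph{Main obstacle.} The decisive difficulty is establishing expansion at $c=3/2$, where every vertex has degree exactly~$3$, first moments of bad small configurations are only polynomially small, and one must appeal to the second-moment or small-subgraph conditioning method of Robinson and Wormald --- or construct a tailored contiguity argument between $G_{n,3n/2}\mid \d\ge 3$ and the uniform $3$-regular graph --- in order to extract the required expansion \whp. For $c$ bounded away from $3/2$ the fraction of degree-$3$ vertices is sub-constant, and the first-moment argument should go through without such additional machinery.
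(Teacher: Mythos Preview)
The statement you are attempting to prove is explicitly labelled a \emph{Conjecture} in the paper; the authors do not prove it, and there is no argument in the paper to compare your proposal against. The paper's main result concerns the different model $G_{\rm 3\text{-}out}$, and the conjecture about $G_{n,cn}$ conditioned on $\d\ge 3$ is stated only as motivation and left open.

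As for the plan itself, there are two concrete gaps. First, your closing remark that ``for $c$ bounded away from $3/2$ the fraction of degree-$3$ vertices is sub-constant'' is false: in $G_{n,cn}$ conditioned on $\d\ge 3$ the degree of a vertex is asymptotically a Poisson of mean $2c$ conditioned on being at least $3$, so for every fixed $c$ a \emph{constant} fraction of vertices have degree exactly $3$. Second, and more seriously, this undermines the sprinkling step. If you hold back an $\ep$-fraction of the edges to form the reservoir $G_2$, then each degree-$3$ vertex loses all contact with $G_1$ with probability $\Theta(\ep^3)$ and drops to degree $\le 2$ in $G_1$ with probability $\Theta(\ep)$; since $\Theta(n)$ vertices have degree $3$, the skeleton $G_1$ will \whp\ have $\Theta(n)$ vertices of degree at most $2$. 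Your P\'osa expansion hypothesis $|N_{G_1}(S)|\ge 2|S|$ then fails already for singletons, and the rotation machinery does not get off the ground. (This is exactly why, even in the $3$-out model the paper actually treats, the authors cannot simply sprinkle: they set aside arcs only from vertices that retain in-degree $\ge 1$, restore arcs at vertices whose degree would otherwise drop below $3$, and replace the naive expansion/long-path step by a Tutte--Berge argument producing a $2$-matching with few components.)

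At the single point $c=3/2$ your contiguity idea is in fact overkill: with $3n/2$ edges and $\d\ge 3$ the degree sum forces every vertex to have degree exactly $3$, so the conditioned model \emph{is} the uniform $3$-regular graph and Robinson--Wormald applies directly. The hard range is $c>3/2$, and there your outline does not yet contain a mechanism that survives the unavoidable linear population of degree-$3$ vertices.
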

\noindent
Another well-studied sparse random graph is the 
random regular graph. Let $G_r$ denote a graph chosen
uniformly at random from the set of $r$-regular graphs with vertex set
$[n]$. Robinson and Wormald \cite{RW} showed $G_r$ is Hamiltonian
\whp\ for $r\geq 3$, $r$ constant. Allowing $r$ to grow with $n$ 
presented some
challenges, but they have now been resolved (see Cooper, Frieze 
and Reed
\cite{CFR} and Krivelevich, Sudakov, Vu and Wormald \cite{KSVW}).

We now come to the topic of this paper: $G_{m\text{--out}}$. We begin 
with vertex set $V=[n]$. Each $v\in V$ independently chooses $m$
random out-neighbors to create the random digraph $D_{m\text{--out}}$. We
then obtain $G_{m\text{--out}}$ by ignoring orientation. 
Note that \( G_{m\text{--out}} \) is a graph with minimum degree \(m\) and average
degree \(2m \).  Note further that
there is the 
potential for multiple edges, but the expected number is $O(m^2)$.
So, we can either allow these multiple edges or condition on them
not occurring.  Since the probability that there are no multiple
edges is bounded away from zero, any property that holds \whp\
in the model that allows multiple edges also holds \whp\ when 
we condition
on no multiple edges. The Hamiltonicity of
$G_{m\text{--out}}$ was first discussed by Fenner and Frieze \cite{FF}. They
showed that $G_{\rm 23-out}$ is Hamiltonian \whp.  
This was improved to \( G_{\rm 10-out} \) by Frieze \cite{F} and to
\( G_{\rm 5-out} \) by
Frieze and {\L}uczak \cite{FL}. Cooper and Frieze \cite{CF} 
showed that 
the digraph
\( D_{\rm 2-in, 2-out} \) (here each vertex chooses 2 out-neighbors 
and 2 in-neighbors
uniformly at random) has a directed Hamilton cycle \whp.  This implies
that \(G_{\rm 4-out} \) is Hamiltonian \whp.  So the main open question is 
whether $G_{\rm 3-out}$ is Hamiltonian \whp\ or not (note that, \whp\
\( G_{\rm 2-out} \) contains 3 vertices of degree 2 that have
a common neighbor and is therefore not Hamiltonian \whp). 
We settle this question:
\begin{theorem}\label{th1}
$$\lim_{n\to\infty}\Pr(G_{\rm 3-out} \text{ is Hamiltonian})=1.$$
\end{theorem}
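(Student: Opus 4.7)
The plan is to split each vertex's three random out-choices into a \emph{skeleton} of two edges (yielding a copy of $G_{\rm 2-out}$) and a \emph{reserve} of one edge, conditionally independent given the skeleton. Following the pattern used for $G_{\rm 5-out}$ by Frieze and {\L}uczak \cite{FL} and for $G_{\rm 4-out}$ by Cooper and Frieze \cite{CF}, the skeleton would be used to produce a near-Hamiltonian structure such as a $2$-factor or a spanning union of long paths and cycles, and the reserve edges would then be used, via P\'osa-style rotation--extension and cycle-merging, to finish off a Hamilton cycle. The difficulty is that $G_{\rm 3-out}$ sits right at the threshold where every vertex has minimum degree exactly $3$, so there is almost no slack left in the random choices.

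First I would analyze the skeleton $G_{\rm 2-out}$ and locate within it a near-spanning $2$-factor $\cF$, with any exceptional vertices (vertices not on $\cF$, or vertices lying in poorly expanding local configurations) put aside for separate treatment. The number of cycles of $\cF$ should be $O(\log n)$ \whp. Next I would use the reserve edge at each vertex to merge the cycles of $\cF$: for a vertex $v$ on a cycle $C$ whose reserve out-neighbor $u$ lies on a different cycle $C'$, one can merge $C$ and $C'$ into a single cycle on $V(C) \cup V(C')$ by deleting two skeleton edges and adding the reserve edge together with a chord edge. Controlling the number of merges required reduces to showing that the reserve edges are well distributed across the cycles of $\cF$.

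Having produced a single large cycle on the non-exceptional vertices, I would break it to obtain a path and run the classical P\'osa rotation--extension argument, with rotations performed along reserve edges, to produce \whp\ a family of $\OM(n)$ alternative endpoints. Combined with sprinkling of the remaining random choices, this yields a Hamilton cycle on the non-exceptional vertices, which can then be extended to absorb the exceptional vertices by explicit local arguments based on their few remaining reserve edges. The key probabilistic ingredient throughout is an expansion statement of the form: for every vertex set $S$ of appropriate size, $|N(S)| \ge c|S|$ \whp, the expansion taking place with respect to the conditional law of the reserve edges given the skeleton.

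The main obstacle will be proving this expansion statement in the presence of a constant fraction of degree-$3$ vertices. In $G_{\rm 4-out}$ the extra edge per vertex gives enough slack for standard P\'osa arguments to go through; in $G_{\rm 3-out}$ one must rule out a variety of small ``trap'' configurations built from clusters of degree-$3$ vertices (e.g.\ small subgraphs all of whose vertices and neighborhoods have low degree), and carry out a delicate conditional analysis of the reserve edges given the skeleton. I expect the bulk of the technical work of the paper to lie in this combinatorial-probabilistic expansion analysis and the associated case work for bad local configurations, rather than in the relatively standard rotation--extension framework layered on top.
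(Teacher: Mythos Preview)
Your high-level framework (skeleton plus reserve, find a $2$-matching, then rotation--extension to close cycles) is the same as the paper's, but the specific split you propose does not work, and the paper's actual split is the heart of the argument.

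If you put all three out-arcs of every vertex into play as ``two skeleton $+$ one reserve'', the skeleton is $G_{\rm 2-out}$, which has minimum degree $2$ and a positive fraction (roughly $e^{-2}n$) of vertices of degree exactly $2$. This kills the P\'osa rotation step: an endpoint of degree $d$ yields $d-1$ new endpoints, so degree-$2$ endpoints give no branching, and you cannot get $\Omega(n)$ endpoints by rotating inside the skeleton. You try to compensate by doing rotations ``along reserve edges'', but each vertex has only one reserve edge, so you can reveal at most $n$ of them in total; you cannot afford to burn a linear number of reserves per closing when (as the paper shows) the number of components you must merge is $\Theta(n/\log n)$, not $O(\log n)$. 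Your cycle-merging step also relies on an unspecified ``chord edge'', and there are no ``remaining random choices'' left for a separate sprinkling round.

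The paper's solution is to set aside the third arc for only a \emph{small} random set $K$ of size $k=\lceil n/\log^{1/2}n\rceil$, and even then to put back the third arc for any $v\in K$ that would otherwise have degree $2$. The resulting skeleton $G_2$ has $\delta(G_2)\ge 3$, so P\'osa rotations can be carried out \emph{deterministically in $G_2$} and produce $\ge n/100$ endpoints (Lemma~\ref{lem:extrot}). A Tutte--Berge calculation (Lemma~\ref{lem:2match}) gives a simple $2$-matching in $G_2$ with at most $6n/\log n$ components. The reserve consists of only $\approx n/\log^{1/2}n$ arcs, which is enough because only $O(n/\log n)$ closings are needed; the real work is Lemma~\ref{lem:uni}, a conditioning lemma showing that, given $G_2$, each rotation endpoint is in $K\setminus L$ with probability $\gtrsim k/(n(\log\log n)^3)$, so a reserve arc is available often enough. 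That delicate conditional analysis (what you correctly anticipated as ``the bulk of the technical work'') is precisely what makes the tiny reserve sufficient, and it has no analogue in your $2+1$ scheme.
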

We give a proof of Theorem~\ref{th1} in Section~\ref{Proof}.  The proof
hinges on a few key Lemmas, the proofs of which are given in later sections.

\section{Proof of Theorem~\ref{th1}}\label{Proof}

The proof has two main steps.  
Before the first step, we set aside a small set of edges that will
not be used until the second step.  In
the 
first step we find a collection of vertex-disjoint
paths and cycles that covers the vertex set.  The set-aside
edges are then used in the second step to transform
this collection of cycles and paths into a Hamilton cycle.

Of course, we need
some notation.
For a digraph $D=(V,A)$ and $v\in V$ let $d^+_D(v),d^-_D(v)$ denote
the out-degree and in-degree, respectively, of $v$ in $D$. 
Let 
$V=[n]$ and let $\OM$ denote the set of digraphs with vertex set $V$
in which $d^+_D(v)=3$ for $v\in V$. 
It will be convenient to let 
the out-arcs for each vertex be chosen 
with replacement and loops allowed. 
%First we
%make the choices for vertex 1, then vertex 2 etc. 
Thus $|\OM|= n^{3n}$ and
$D_{3-out}$ is sampled uniformly from $\OM$.  
For a digraph $D$ let $\G_D$ be the underlying graph obtained by
ignoring orientation. Thus $G_{3-out}=\G_{D_{3-out}}$. 
%Let $\cG=\{\G_D:\;D\in \OM\}$.

We separate the 
edge set as follows.  We begin by randomly selecting a set of vertices
\(K\) and setting aside the third out-arc of each vertex in \(K \).  
Formally, for 
$K\subseteq V$ define $\OM_K$ to be the set of digraphs $D$
for which $d_D^+(v)=2$ for all $v\in K$ and $d_D^+(v)=3$ for all $v\in V\setminus K$.
Thus, \( | \OM_K | = n^{3(n-k) + 2k} = n^{3n-k} \).
Further, set
\[ \OM_k=\bigcup_{K \in \binom{[n]}{k}}\OM_K.\] 
We work with the uniform distribution on 
\( \Omega_k \times [n]^k \).  The first component gives all but \( k \) of the arcs
in \(D_{3-out} \) and the second component determines the heads of the 
remaining \(k\) arcs.  
For $D\in \OM_k$ we let 
\begin{eqnarray*}
K(D)&=&\{v\in V:\;d_D^+(v)=2\}\\
L(D)& = &\{ v \in K(D) : d^-_D(v) = 0 \}.
\end{eqnarray*}  
We add
the third out-arc of each vertex \( v \in L(D) \) back into the graph 
(thereby arriving at graph with minimum
degree 3).  The resulting digraph is used for the first part of the proof.  The remaining
set-aside arcs (i.e. the third out-arc from each 
\( v \in K \setminus L \)) are used for the second
part of the argument. 

Set 
\[ k = \rdup{\frac{ n }{ \log^{1/2} n}}. \]
Let \(D_1 = (V,A_1)\) denote the digraph drawn uniformly at random from
\( \Omega_k \).  Let \(A\) be the set of arcs then determined by 
the randomly chosen \(k\)-tuple in \( [n]^k \).
Let \(A_2 \) be the set of arcs in \(A\) with tails in \(L(D_1)\) and set \( A_3 = A \setminus A_2\).
Let \(E_3 \) be the set of edges given by ignoring the orientations of the arcs in \(A_3 \). 
Set \( D_2 = D_1 + A_2 \) and \( G_2 = \G_{D_2} \).  In the first part of the 
argument we find a collection of paths and cycles in
\(G_2 \)
that covers the vertex set.  The edges in \(E_3 \) are then 
used in the second part of the argument to convert this 
collection of paths and cycles into a Hamilton
cycle.
% We separate the set of random edges into two
%parts by working with the uniform distribution on the space
%\( \Omega_k \times [n]^k \).  The first component of an ordered pair
%in \( \Omega_k \times [n]^k \) gives all but \(k\) of the arcs in \( D_{3-out} \).  The first
%component is used in the first step of the proof.  
%The second component establishes the heads of the remaining \(k\) arcs, which are used in
%the second part of the proof.
%Note that each digraph in \( \Omega \) corresponds to exactly \( \binom{n}{k}3^k \)
%ordered pairs in \(  \Omega_k \times [n]^k \).
%
%
%
%Throughout the proof $\b<.1$ is a constant. Let $k = n^\b$ and 
%$D_1$ be chosen randomly from $\OM_{k}$ and let $G_1=\G_{D_1}$. 
%Each $i\in K(D_1)$ has only made two random choices. 
%Let $A_2$ be the $k$ 
%arcs determined by the third random choice of out-edge for each $i\in
%K(D_1)$.  Let \( E_2 \) be the set of \(k\) edges given by ignoring the
%orientations of the arcs in \(A_2\).
%Thus, adding $E_2$ to $D_1$ creates a random digraph $D$ chosen
%uniformly from $\OM$. $G=\G_D$ is the graph we wish to show is
%Hamiltonian \whp.
Since
$G_2$ is a super-graph of $G_{2-out}$ we have the following:
\begin{lemma}[Fenner, Frieze \cite{FF0}]
\label{lem:connected}
$G_2$ is connected \whp.
\end{lemma}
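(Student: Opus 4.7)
The plan is to reduce the statement to the connectivity of $G_{\rm 2-out}$ and then establish the latter by a first moment calculation over potential cut-sets.

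For the reduction, note that $D_1$ was constructed by choosing at least two out-arcs uniformly for each vertex, so we may identify a sub-digraph distributed as $D_{\rm 2-out}$, and therefore $G_2 \supseteq G_{\rm 2-out}$. Since any spanning supergraph of a connected graph is connected, it suffices to prove that $G_{\rm 2-out}$ is connected \whp.

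For the connectivity of $G_{\rm 2-out}$: if the graph is disconnected then there is a set $S\subseteq V$ with $1\le |S|=s\le n/2$ that is a union of components, i.e.\ no arc of $D_{\rm 2-out}$ crosses the cut $(S,V\setminus S)$ in either direction. For a fixed $S$, each of the $s$ vertices in $S$ must choose both of its out-neighbors in $S$ and each of the $n-s$ vertices of $V\setminus S$ must choose both of its out-neighbors outside $S$, so the probability is exactly $(s/n)^{2s}\bigl((n-s)/n\bigr)^{2(n-s)}$. A union bound yields
\[
\Pr(G_{\rm 2-out}\text{ disconnected}) \;\le\; \sum_{s=1}^{\lfloor n/2 \rfloor}\binom{n}{s}\left(\frac{s}{n}\right)^{2s}\!\left(\frac{n-s}{n}\right)^{2(n-s)}.
\]

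I would then verify that this sum is $o(1)$ by handling the endpoints separately. The $s=1$ term contributes only $n\cdot n^{-2}\cdot(1-1/n)^{2(n-1)}=O(1/n)$. For $2\le s\le n/2$, applying $\binom{n}{s}\le(en/s)^s$ together with $((n-s)/n)^{2(n-s)}\le e^{-2s(n-s)/n}\le e^{-s}$ (valid since $(n-s)/n\ge 1/2$) bounds each summand by $(s/n)^s$. This is geometrically decreasing for $s=O(\sqrt{n})$ and doubly small beyond, so the whole sum is $o(1)$.

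The main obstacle is essentially bookkeeping: the $s=1$ contribution decays only like $1/n$, so one must confirm that the factor $((n-s)/n)^{2(n-s)}$ is strong enough to offset the binomial coefficient throughout the full range of $s$. Complications arising from self-loops or repeated out-choices (permitted by the with-replacement sampling on $\OM$) only affect the analysis at lower order, since the per-vertex probability of such an event is $O(1/n)$ and does not interfere with the union bound above.
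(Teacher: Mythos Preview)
Your proposal is correct and follows exactly the reduction the paper uses: the paper observes that $G_2$ is a supergraph of $G_{\rm 2-out}$ and then simply cites Fenner--Frieze \cite{FF0} for the connectivity of $G_{\rm 2-out}$, without reproducing a proof. Your first-moment cut argument is the standard one (and is essentially what is in \cite{FF0}), so there is nothing to add.
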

\noindent
Define a {\bf simple 2-matching} in a graph \(H\)
to be a subgraph of \(H\) with maximum degree 2, i.e. a collection of vertex-disjoint
paths and cycles.  We assume that the vertex set of a simple 2-matching in \(H\) is \( V(H) \).  So some
paths in a 2-matching could be isolated vertices. 
If $F$ is a 2-matching let $\k(F)$ be the number of components in \(F\) and
let $\r(F)$ be the number of path components in \(F\).  
If \(F\) is a simple 2-matching in \(H\) and \( x \in V(H) \) then we
define \( F^x \) to the component in \(F\) that contains the vertex \(x\).  
Finally, if \(F\) is a simple 2-matching, we let \( \pee \) denote 
the set of vertices covered by the path components of \(F\).
The first step in the proof is to establish that \(G_2\) has a simple 2-matching 
with few components.
\begin{lemma}
\label{lem:2match}
{\bf Whp} $ G_2 $ contains a simple 2-matching $F_2$ with $ \k(F_2) \leq  \frac{6n}{ \log n} $.
\end{lemma}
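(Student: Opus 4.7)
The plan is to take $F_2$ to be any simple $2$-matching in $G_2$ minimizing a suitable potential, and then show via a first-moment estimate that every component of such a minimizer has at least $\tfrac{1}{6}\log n$ vertices. Note first that $G_2$ has minimum degree $3$ \whp: every vertex $v \in L(D_1)$ acquires a third arc from $A_2$, while every vertex in $V \setminus L(D_1)$ already has total degree at least $3$ in $\G_{D_1}$. Define
\[
\Phi(F) \;=\; \sum_{C \text{ component of }F} \max\!\left(0,\;\tfrac{1}{6}\log n - |V(C)|\right),
\]
so that $\Phi(F)=0$ immediately gives $\kappa(F)\le 6n/\log n$.

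\emph{Local escape property.} The probabilistic input I would establish is that \whp\ no vertex set $S\subseteq V$ with $1 \le |S| \le \tfrac{1}{6}\log n$ is closed under the out-arcs of $D_2$. Each vertex has at least two out-arcs in $D_2$, and their heads are essentially uniform and independent over $[n]$; so for a fixed $S$ with $|S|=s$ the probability that every out-arc from $S$ lands in $S$ is at most $(s/n)^{2s}$, and a union bound yields
\[
\sum_{s=1}^{\lfloor (\log n)/6\rfloor} \binom{n}{s}\left(\frac{s}{n}\right)^{2s} \;\le\; \sum_{s\ge 1}\left(\frac{e\,s}{n}\right)^{s} \;=\; o(1).
\]

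\emph{Reducing $\Phi$.} Suppose $F$ has a component $C$ of fewer than $\tfrac{1}{6}\log n$ vertices. The escape property applied to $V(C)$ produces a $G_2$-edge $e=\{v,w\}$ with $v \in V(C)$ and $w \notin V(C)$. A P\'osa rotation inside $C$ can be used to make $v$ an endpoint of $C$ (converting $C$ from a cycle to a path if needed), and then one ``splices $e$ in'': add $e$ to $F$ and, if $\deg_F(w)=2$, delete one of the two edges at $w$, choosing it so that the detached piece of $w$'s component is as small as possible. A short case analysis shows that a suitable move yields a new simple $2$-matching $F'$ with $\Phi(F')<\Phi(F)$. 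Iterating terminates at $F_2$ with $\Phi(F_2)=0$, and the lemma follows.

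\emph{Main obstacle.} The delicate point is the splice step when $w$ lies in the interior of a long path: an unlucky choice of deleted edge can split that path into two short pieces, \emph{increasing} $\Phi$. Avoiding this uses minimum degree $3$ in $G_2$: since $v$ has at least three neighbors, they cannot all sit in ``dangerous'' positions (too near an endpoint of a short sub-path of a neighbor's component), so at least one choice of $w$ -- perhaps after additional P\'osa rotations inside $w$'s component to shift it away from an endpoint -- must be safe. Formalizing this safety argument, together with the small-cycle edge cases, is the combinatorial heart of the proof and is where the specific constant $\tfrac{1}{6}$ in the bound and the min-degree-$3$ property of $G_2$ interact.
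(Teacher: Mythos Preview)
Your approach is entirely different from the paper's, which relies on the Tutte--Berge min--max formula for simple $2$-matchings: the paper shows that \whp\ no pair $U,S$ of disjoint sets (with $S$ independent) can make the Tutte--Berge expression small, via a three-case first-moment calculation combined with bounds on the number of small dense subgraphs and on $\alpha(G_2)$. Your local-improvement scheme with the potential $\Phi$ has a genuine gap, not merely missing details.

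The core difficulty is exactly the one you flag as the ``main obstacle,'' and your proposed resolution does not close it. First, the claim that a P\'osa rotation inside $C$ can make a prescribed internal vertex $v$ an endpoint is unjustified: rotations move endpoints around, but they do not let you target a specific internal vertex without additional chords in $G_2[V(C)]$, which you have not secured. Second, even granting that step, the splice can strictly increase $\Phi$ for \emph{every} choice of deleted edge at $w$. For a concrete instance with threshold $L=\tfrac16\log n$, take $|V(C)|=c$ with $L/2<c<L$, let $w$ sit at the midpoint of a path $D$ with $|V(D)|=L$, and suppose the only escape edge from $V(C)$ is $\{v,w\}$. Either deletion at $w$ leaves a detached piece of size $d_2\le \lceil L/2\rceil<c$, so the new contribution is $\max(0,L-c-d_1)+ (L-d_2)\ge L-d_2>L-c$, and $\Phi$ goes up. Your fix invokes the three neighbours of $v$, but the escape property you proved guarantees only a single edge leaving $V(C)$; the particular $v$ can have its other two neighbours inside $V(C)$, so there is no second $w$ to try, and rotations inside $D$ cannot move $w$ either. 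A workable local argument would need a much stronger expansion statement for small sets (controlling where endpoints land after rotations), which your single out-arc escape bound does not provide. This is precisely why the paper resorts to the global Tutte--Berge characterization rather than local surgery.
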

\noindent
Lemma~\ref{lem:2match}
is proved in Section~\ref{sec:2matching}.  The proof uses an extension of 
the Tutte 1-factor theorem.

The {\bf Extension-Rotation Procedure} defined 
in Figure~\ref{fig:ER} 
%Now we proceed to
%The $k$ edges not used
%in the construction of $F_1$ are used in the second step of the proof 
converts $F_2$ into a Hamilton
cycle.  This
procedure takes as
input a fixed graph \( G_2 = ([n],E) \), the set of arcs \(A_3 \) 
(which can also be viewed as fixed)
and a 2-matching $F = F_2$ in \( G_2\), and, if successful, it 
produces a Hamilton cycle.

\begin{figure}
\caption
{\bf Extension Rotation Procedure }
\label{fig:ER}

\begin{tabbing}

While \( \k(F) + \r(F) > 1 \): \\[7mm]
\hskip1cm \= If  \( \pee = \emptyset \): \\[2mm]
\> \hskip1cm \=
Since \( G_2 \) is connected, there exists an edge \( \{x,y\} \in E \) 
such that \\  
\> \> \ \ \ \( F^x \) and \( F^y \) are distinct cycle components.\\[2mm]
\> \>  Replace \( F^x \) and \( F^y\) with a single path using the edge \( \{x,y\} \).\\[7mm]
\> If  \( \pee \neq \emptyset \):\\[2mm]
\> \> Let \(P\) be a longest path in \(F\). \\[2mm]  
\> \> Let \(X\) be the set of vertices spanned by \(P\). \\[2mm]
\> \> Let \eee\ be the set of end-points of paths that span \(X\). \\[2mm]
\> \> If there exists \( \{x,y\} \in E \) such that
\(x \in \eee \) and \( y \not\in \pee \):\\[2mm]
\> \> \hskip1cm \= Replace \(P\) and \(F^y\) with a single path.\\[2mm]
\> \> If there exists \( \{x,y\} \in E \) such that
\(x \in \eee \) and \( y \in \pee \setminus X \):\\[2mm]
\> \> \> Replace \(P\) and \(F^y\) with two paths by extending a path\\
\> \> \> \ \ \ on \(X\) ending with \(x\) along \( F^y\) to one of the 
endpoints of \(F^y\).\\[2mm]
\> \> If \( N(x) \subseteq X \) for all \( x \in \eee \):\\[2mm]
\> \> \>  Choose an ordering of the elements of \( \eee\) uniformly at random.\\[2mm]
\> \> \>  Consider the elements of \( \eee \) one at a time.  For 
each such \(x \in \eee\):\\[2mm]
\> \> \> \hskip1cm \= Let \( \eee(x) \) be the set of 
endpoints of paths spanning \(X\) \\
\> \> \> \> \ \ \ such that \(x\) is the other endpoint.\\[2mm]
\> \> \> \>
Check whether or not \(x \in K \setminus L \).\\[2mm]  
\> \> \> \> If \(x \in K \setminus L \) determine the arc \( (x,y_x) \in A_3 \). \\[2mm]
\> \> \> \> If \(x \in K \setminus L \) and \( y_x \in \eee(x) \) then use \( \{x,y_x\} \) to
replace \(P\) with\\
\> \> \> \> \ \ \  a single cycle, add \( \{x,y_x\} \) to \(E\), and move onto the next\\
\> \> \> \> \ \ \  iteration of the procedure.\\[2mm]
\> \> \> \> If \(x \not\in K \setminus L \) or \( y_x \not\in \eee(x) \) then move onto 
the next element of \(\eee\). \\[2mm]
\> \> \> If \( x \not\in K \setminus L \) or \( y_x \not \in \eee(x) \) for all \(x \in \eee \) 
then {\bf Fail.}
\end{tabbing}

\end{figure}

For the purposes of analysis we assume that, 
while $G_2$ is given, $K\setminus L$ and the arcs $A_3$ are not specified ab initio. 
At each stage there is a set of possibilities for the parts of $K\setminus L$
and $A_3$ that have not been revealed. The algorithm ``sees'' $G_2$ 
but does not know the orientations of the edges in $D_2$. 
It ``learns'' about $K\setminus L$ and $A_3$ as needed.
More precisely, we condition on $G_2$ and there is a probability distribution for $K\setminus L$.
Information needed
to answer the question ``is $x\in K\setminus L$'' is revealed as the algorithm proceeds.  Similarly,
the random arcs in \(A_3\) are revealed one at a time as necessary as the algorithm proceeds (note that the
heads of these arcs are uniformly distributed on \([n]\)).
Thus we follow the ``principle of deferred decisions'' in a way made precise below.

%\newpage
Appealing to Lemmas~\ref{lem:connected}~and~\ref{lem:2match},
we assume \( G_2 \) is connected and \( \k(F_2) \le 6n/ \log n  \).
%We reveal (some of) the random arcs in \( A_3 \) one at a time as the algorithm proceeds.
In the course of the procedure 
we iteratively modify the
2-matching \(F\) and occasionally add edges from \( E_3 \) to the edge
set \(E\).  In each iteration we attempt to reduce \( \k(F) \),  extend 
the longest path in \(F\) while keeping \(\k(F)\) and \( \r(F) \) fixed, or 
replace a path component in \(F\) with a cycle
component (thereby reducing \( \r(F) \)).  We use edges from \( E \) whenever 
possible.  We resort to the
arcs in \( A_3\) only when we cannot perform any of the operations using only the
edges in \( E \) and
only to replace a path component with a cycle component.  Since \( \r(F) \) only increases in situations
where \( \r(F) = 0 \), and 
in this case we increase \( \r(F) \) to 1 and simultaneously
reduce \( \k(F) \), it follows that the number of arcs from \( A_3 \) that we use
is bounded by \( \k(F) + \r(F) \), which we assume is at most \( 12 n/ \log n  \).

In order to analyze this procedure, we
condition.  For a graph \(G\) we set 
\[  \Omega_k^G = \{ D \in \Omega_k : \G_D = G \} . \]
We prove that the Extension-Rotation Procedure succeeds \whp, and thereby
prove Theorem~\ref{th1}, by working with the partition \( \fee \) 
of \( \Omega_k \times [n]^k \) in which \( (D_1,A) \) and \( (D_1^\prime,A^\prime) \) are in the same
part if  \( \G_{D_1} = \G_{D^\prime_1} \) and
\( \G_{D_2} = \G_{D^\prime_2} \) (in other words, each part is a subset of a set of the form 
\( \Omega_k^G \times [n]^k \) for some graph \(G\)).  One virtue of this
partition is that if we condition on \( (D_1 ,A) \in \Omega_k^G \times [n]^k \) then \(K \setminus L\) is
not determined, it can be viewed as a random set (\(L\), on the other hand, is determined).  
Of course, the set \( K \setminus L \) is not a uniform random set, but we will see below that it
is approximately uniform. Another virtue is that the progress of the Extension-Rotation procedure 
depends on $\G_{D_2}$ as opposed to $D_2$ and so it is identical for all digraphs in the same part of the partition,
up until the first time we check a vertex for membership in \( K \setminus L \).  This check 
leads to a refinement of the 
partition. 

We define a filtration 
\( \fee = \fee_0, \fee_1, \dots \) of \( \Omega_k \times [n]^k\).  
$\fee_0$ is the trivial partition with one part that is equal to \( \Omega_k \times [n]^k\).
The partition \( \fee_1 \) is determined by \( \Gamma_{D_1} \) and the third out-arcs of the vertices in \( L \): 
\( (D_1,A) \) and \( (D^\prime_1, A^\prime) \) are in the same part of \( \fee_1 \) if
\( \G_{D_1} = \G_{D^\prime_1} \) and the arcs in \(A\) with tails in \( L(D_1) = L(D_1^\prime) \)
are identical to the arcs in \( A^\prime \) with tails in \( L(D^\prime_1) = L(D_1) \).
The remaining partitions are then determined by the 
execution of the Extension-Rotation Procedure.
We refine the partition each time we consider an element of \( \eee \).
Thus, each time we arrive at a 2-factor \(F\) such that 
\( {\mathcal P}(F) \neq \emptyset \) and \( N(x) \subseteq X \) for all
\( x \in \eee \), where \(X\) is the vertex set of a longest path in \(F\)
and \( \eee \) is the set of endpoints of paths spanning \(X\), we will likely 
refine the partition many times.  This is because we will likely have to check many elements of
\( \eee \) before we find \( x \in \eee \) such that \( x \in K \setminus L \) and 
\( y_x \in \eee(x) \).  Let \( C_i \) be the first \(i\) vertices checked
in the Extension-Rotation Procedure.  Further, define 
\[ I_i = C_i \cap (K \setminus L)  \ \ \ \ \text{ and } \ \ \ \ O_i = L \cup ( C_i \setminus K ). \]
In words, \( I_i \) are the vertices among the first \(i\) checked that are in \(K \setminus L\)
and \( O_i \) is \(L\) together with the vertices among the first \(i\) checked 
that are out of \(K \setminus L\). If
ordered pairs \( (D_1, A) \) and \( ( D^\prime_1, A^\prime) \) are in the same part of
\( \fee_i \) then \( \G_{D_1} = \G_{D^\prime_1} \), \( I_i((D_1,A)) = I_i((D^\prime_1,A^\prime)) \), 
\( O_i((D_1,A)) = O_i((D_1^\prime,A^\prime)) \) and the arcs in \( A \) with tails in \( I_i \)
are identical to arcs in \( A^\prime \) with tails in \( I_i \).

If \( \cX \) is a part in \( \cF_i\)
then 
$\cD_1(\cX)=\set{D_1 \in \Omega_k:\;\exists A\in [n]^k\ such\ that\ (D_1,A) \in \cX}$.
Note that if $(D_1,A)$ is chosen uniformly at random from $\cX$ then $D_1$ will be chosen uniformly
at random from $\cD_1(\cX)$.

We show
%We begin our analysis of Extension-Rotation by 
%showing 
that throughout the algorithm 
nearly every part \( \cX \) of the partition \( \fee_i \) has the property that
nearly 
all the digraphs in \( \cX \) satisfy certain structural
conditions (i.e.. small maximum degree, no small dense subgraphs, etc.).  
For \( \cX \) a part of
the partition \( \fee_i \) we define \( \G_\cX \) to be the underlying graph determined
by \( \cX \) (i.e. \( \G_{D_1 + A_2} \) together with the 
edges from \( E_3 \) that have been added by the extension-rotation procedure) and
\( A_{\cX} \) to be the out-arcs from \(A_3\) determined by \( \cX\).  Furthermore, define
\( C_{\cX} \), \( I_{\cX} \) and \( O_{\cX} \) to be the sets of vertices checked for membership
in \(K \setminus L\), determined to be in \(K \setminus L\), and determined not
to be in \(K \setminus L\), respectively, en 
route to the part \( \cX\).
We discard \( \cX \) if \( \G_\cX\) has does not have the desired 
structural properties or if 
too many digraphs \( D \in \cD_1(\cX)\) do not have the desired structural properties.
In Sections~\ref{sec:extrot}~and~\ref{sec:uni}, respectively, we formally define
two events: \( \cE_G \) a collection of graphs and \( \cE_D\)
a collection of digraphs, see \eqref{EG} and \eqref{ED} respectively.  We will prove that if \( (D_1, A) \) is 
chosen uniformly
at random from \( \Omega_k \times [n]^k \) then
\begin{gather}
\Pr( \exists B \subseteq A_3 \text{ such that }  \G_{D_1 + A_2 + B} \in \cE_G) = o(1)
%, \ \ \ \text{ and } 
\label{eq:graph} \\ 
\Pr( D_1 \in \cE_D \ \wedge \ \G_{D_1} \not\in \cE_G ) 
\le e^{ - \log^2 n }.  \label{eq:digraph} 
\end{gather}
At stage \(i\) of the algorithm we discard each part \( \cX \) of \( \fee_i \) such 
that \( \G_\cX \in \cE_G \).  
We also
discard \( \cX \) if
\( |\cD_1(\cX) \cap \cE_D |  > n^2 e^{ - \log^2 n} |\cD_1(\cX)| \).  
We bound the probability that digraph \(D\) is discarded at stage \(i\)
by applying the following version of 
Markov's inequality.  If \( \cE \) is an event and \( \cG \) is a partition of
our probability space then
\begin{equation}
\label{eq:markov}
\Pr[ \Pr[ \cE| \cG] \ge \e ] = \Pr [ E [ 1_\cE | \cG ] \ge  \e] \le \frac{1}{\e} E [ E[ 1_\cE| \cG]] 
= \frac{ E[ 1_\cE]}{ \e} = \frac{ \Pr(\cE)}{ \e}.
\end{equation}
Applying (\ref{eq:markov}) with \( \e = n^2 e^{ -\log^2n} \) we see that the probability that
there exists an index \(i\) such that our randomly chosen \((D_1,A)\) is discarded at stage \(i\)
due to \eqref{eq:digraph}
is bounded above by \( 1/n \).

So, we restrict our attention to parts \( \cX \) of the partition \( \fee_i \)
in the filtration given by the Extension-Rotation algorithm such that 
\( \G_\cX \not\in \cE_G \) and the probability \( D_1 \) is in \( \cE_D \)
where \(D_1\) is chosen uniformly at random from \(  \cD_1(\cX)\)
is at most \( n^2 e^{-\log^2n} \).  The remainder of our analysis hinges on
the following two Lemmas.  Of course, we must only show that with high probability
the Extension Rotation Procedure does not arrive in a situation were the longest path in the current
2-factor cannot be completed to a cycle using the arcs in \(A_3\).  To this end,
we first show that there are many end-points of paths that span the vertex set
spanned by the longest path in the 2-matching.
\begin{lemma}
Let \(G\) be a fixed graph on \(n\) vertices such that \(\delta(G) \ge 3 \).  
Suppose further 
that \(P\) is a maximal path in \(G\), \(x\) is an
endpoint of \(P\), \(X\) is the set of
vertices spanned by \(P\) and \( \eee(x) \) is the set of end-points (other than \(x\)) 
of paths that span \(X\) and have \(x\) as one end-point.  Then we have
\label{lem:extrot}
\[ G \not\in \cE_G  \ \ \ \ \Rightarrow \ \ \ \ \eee(x) >  \frac{ n }{ 100 }. \]
\end{lemma}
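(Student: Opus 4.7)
The plan is a Pósa rotation argument. Suppose for contradiction that $|\eee(x)| \le n/100$, and let $y$ denote the endpoint of $P$ other than $x$.  Set $S := \eee(x) \cup \{y\}$, so $|S| \le n/100 + 1$, and by definition every $z \in S$ is the non-$x$ endpoint of some path $P_z$ from $x$ that spans $X$.  Because $P$ is a maximal path in $G$ and each $P_z$ spans $X = V(P)$, every $P_z$ is also a maximal path in $G$; in particular $N_G(z) \subseteq X$ for every $z \in S$.

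Next I would carry out the Pósa rotation step.  For $z \in S$, traverse $P_z$ from $x$ to $z$ and, for each neighbor $v \in N_G(z)$ that is not the immediate predecessor of $z$ on $P_z$, reverse the segment of $P_z$ from $v^+$ to $z$, where $v^+$ is the successor of $v$ on $P_z$.  This produces a path from $x$ spanning $X$ whose new endpoint is $v^+$, so $v^+ \in S \cup \{x\}$.  Standard accounting (for each $w \in S \cup \{x\}$ there are at most two vertices $v$ with $v^+ = w$, one on each side of a rotation) yields
\[
|N_G(S) \setminus S| \le 2|S| + O(1), \qquad \text{hence} \qquad |S \cup N_G(S)| \le 3|S| + O(1) \le \tfrac{3n}{100} + O(1).
\]

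Finally, I would appeal to the hypothesis $G \notin \cE_G$.  The event $\cE_G$ introduced in Section~\ref{sec:extrot} (see~\eqref{EG}) is designed precisely to forbid small subsets with small vertex boundary, so that any graph admitting a set $T$ with $|T| \le n/100 + 1$ and $|N_G(T)| \le 3|T| + O(1)$ lies in $\cE_G$.  The set $S$ produced above would then place $G \in \cE_G$, contradicting the hypothesis and proving $|\eee(x)| > n/100$.

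The main obstacle is the combinatorial bookkeeping of the Pósa rotation: one must verify that the successor map $v \mapsto v^+$ from $N_G(S)$ (minus the path-predecessors) into $S \cup \{x\}$ is at most $2$-to-$1$, even though a different path $P_z$ is used for each $z \in S$.  A secondary issue is matching the constant $1/100$ in the lemma with the expansion threshold baked into $\cE_G$; if the latter is looser, one may need to iterate rotations to enlarge $S$ before invoking the expansion contradiction.
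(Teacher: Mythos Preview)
Your outline has the right high-level shape (P\'osa rotations, then appeal to $G\notin\cE_G$), but it rests on a mistaken description of $\cE_G$.  The event $\cE_G$ in \eqref{EG} is \emph{not} a generic ``small sets have large boundary'' condition.  It is the union of three pieces: $\{\Delta(G)\ge\Delta_0\}$, the density event $\cE_{\rm small}$ (sets $S$ of size at most $\tfrac{10}{4}\log n$ spanning at least $|S|+i$ edges), and the structured event $\cE_{\rm pieces}$.  Crucially, condition~(i) of $\cE_{\rm pieces}$ requires $|S_1|+|S_2|\ge n^{1/30}$, so your set $S$ with $|N(S)|\le 2|S|+O(1)$ does not witness membership in $\cE_{\rm pieces}$ unless you already know $|S|\ge n^{1/30}$.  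And $\cE_{\rm small}$ says nothing about vertex boundaries: a tiny set with $|N(S)|\le 2|S|$ but few internal edges is perfectly compatible with $G\notin\cE_{\rm small}$.  So the final paragraph of your proposal, where you invoke $\cE_G$ directly against $S$, is a genuine gap.

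The paper closes this gap with a two-stage argument that you are missing.  Stage~1 grows the endpoint set via the rotation \emph{tree}: with $T_i$ the endpoints reachable in $i$ rotations, $\delta(G)\ge 3$ gives $|T_{i+1}|\ge 2|T_i|-\delta_i$, and the collisions $\delta_i$ force a small connected subgraph with excess edges, so $G\notin\cE_{\rm small}$ bounds $\sum_{i\le(\log n)/18}\delta_i\le 9$ and yields $|S|\ge n^{1/30}$.  Only then does Stage~2 run the P\'osa argument, and here too more is needed than $|N(S)|\le 2|S|$: one must manufacture the specific $S_1,S_2,T_1,T_2$ structure of $\cE_{\rm pieces}$.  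The paper does this by looking at the maximal sub-paths of $P$ contained in $S$, taking $S_2$ to be the singleton sub-paths (independent by \eqref{Posa2}) and $S_1$ the rest, with $T_1,T_2$ their $P$-neighbors; the edges $y_x$ (and $y_x,z_x$) required in conditions (ii)--(iv) are taken along $P$.  Your sketch does not produce this decomposition, and the ``$2$-to-$1$ successor map'' bookkeeping you flag as an obstacle is in fact handled by citing P\'osa's lemma~\eqref{Posa} directly rather than tracking individual rotations.
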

\noindent
Lemma~\ref{lem:extrot} is proved in Section~\ref{sec:extrot}.
One consequence of Lemma~\ref{lem:extrot} is that each time we find \( x \in \eee \) such that
\( x \in K \setminus L \) then the probability that the third edge out of \(x\) completes the cycle
is bounded away from zero.  Thus, \whp\ the number of cycles we close among the
first \(j\) elements of \(K \setminus L\) found is at least \( j/ 200 \) for, say, \( j \ge \sqrt{n} \).  
Therefore, the number of elements of \(K \setminus L\) found before the successful termination of
the Extension-Rotation Procedure is \whp\ at most \( 1200 n / \log n \).
It remains to 
show that each \(x \in \eee \) we consider (that is in neither \( I_\cX\) nor \( O_{\cX} \))
has a good chance of being in \(K \setminus L \).
\begin{lemma}
\label{lem:uni}
Suppose \( \cX\) is a part of the partition \( \fee_i \) such that \( \G_\cX \not\in \cE_G\) 
and \[ \left| \cE_D \cap \cD_1(\cX) \right|  < n^2 e^{- \log^2 n } \left| \cD_1(\cX) \right|. \]  
Suppose further that \( |I_\cX| 
\le  \frac{1200 n}{ \log n} \)
and \( |O_\cX| = o( n ) \).  If \(D\) is chosen uniformly at random from \( \cD_1(\cX)\)
then
\[\left| \left\{ x \in V : \Pr( x \in K(D) ) \le \frac{ k}{n} \cdot \frac{1}{ 4000 (\log \log n)^3 }
\right\} \right| = o(n). \]
\end{lemma}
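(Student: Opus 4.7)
The plan is a switching (augmenting-path) argument comparing orientations $D \in \cD_1(\cX)$ with $x \in K(D)$ against those with $x \notin K(D)$.

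Writing $\cB_x := \{D \in \cD_1(\cX) : x \in K(D)\}$, $\cA_x := \cD_1(\cX) \setminus \cB_x$, and $T := V \setminus (I_\cX \cup O_\cX)$, I must show $|\cB_x|/|\cA_x| \ge \Omega(k/(n(\log\log n)^3))$ for all but $o(n)$ vertices $x \in T$. For $D \in \cA_x$ (where $x$ has out-degree $3$) and any simple directed path $P = x u_1 \cdots u_\ell = y$ in $D$ with $y \in K(D)\cap T$ and every interior $u_i \notin L \cup I_\cX$, reversing the arcs of $P$ yields $D' \in \cB_x$: the underlying multigraph is unchanged; the set $L$ is preserved (since $x,y,u_i \notin L \subseteq O_\cX$); the fixed third out-arcs of $L \cup I_\cX$ are untouched (no $L \cup I_\cX$-vertex lies on $P$); the out-degrees swap $3 \leftrightarrow 2$ at $x$ and $y$; and all other out-degrees are preserved. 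Since reversal is involutive, $(D,P) \mapsto (D',P^{\rm rev})$ is a bijection between valid pairs $(D,P)$ with $D \in \cA_x$ and pairs $(D',Q)$ with $D' \in \cB_x$ and $Q$ a directed path from some $y \in T \setminus K(D')$ to $x$ with interior disjoint from $L \cup I_\cX$. Summing over both sides yields
\[
|\cA_x| \cdot \overline{N^+_x} \;=\; |\cB_x| \cdot \overline{N^-_x},
\]
where $\overline{N^{\pm}_x}$ are the expected numbers of such paths under the uniform distributions on $\cA_x$ and $\cB_x$ respectively.

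To get the required ratio I would restrict the path family to a suitable class (for example, paths of length exactly $\ell$ for a small constant $\ell$). The upper bound $\overline{N^-_x} = O(n(\log\log n)^3)$ should follow from the maximum-degree bound $\Delta(\G_\cX) = O(\log\log n)$ which I expect $\G_\cX \notin \cE_G$ to enforce: length-$\ell$ directed paths ending at $x$ number at most $n \Delta^{\ell-1}$, giving the claimed bound for an appropriately chosen $\ell$. The lower bound $\overline{N^+_x} = \Omega(k)$ is the crux. Since $|\cE_D \cap \cD_1(\cX)| < n^2 e^{-\log^2 n}|\cD_1(\cX)|$, almost every $D \in \cA_x$ lies outside $\cE_D$, and I expect the definition of $\cE_D$ to guarantee that for such $D$ a directed BFS of depth $\ell$ from $x$ in $D$, avoiding the small set $L \cup I_\cX$ (of size $o(n)$ by the hypotheses), reaches $\Omega(n)$ vertices; since $|K(D) \cap T| = (1-o(1))k$, a corresponding $\Omega(k)$ of these lie in the target. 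All but $o(n)$ vertices $x$ can be handled in this uniform way, with the remaining atypical $x$ (those lying in high-degree or poorly connected regions) absorbed into the exceptional set.

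The main obstacle is the lower bound $\overline{N^+_x} = \Omega(k)$: it requires directed reachability (not merely undirected connectivity) in $D$ itself, together with robustness under deletion of the set $L \cup I_\cX$ from the interiors of paths. This is exactly what the orientation-level event $\cE_D$ (to be defined in Section~\ref{sec:uni}) must deliver while remaining rare enough (at most $e^{-\log^2 n}$) to survive the Markov-style discarding step of \eqref{eq:markov}. The $(\log\log n)^3$ slack in the conclusion corresponds precisely to the $\Delta^{\ell-1}$ path-counting loss in bounding $\overline{N^-_x}$.
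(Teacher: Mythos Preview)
Your high-level switching idea is the same as the paper's: build a bipartite correspondence between $\cA_x$ and $\cB_x$ by reversing directed paths of a fixed length $\ell$ and compare degrees on the two sides. However, the quantitative picture you sketch is internally inconsistent and does not match the paper's parameters, and this is a genuine gap.

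First, $\G_\cX\notin\cE_G$ does \emph{not} give $\Delta=O(\log\log n)$; the degree bound in $\cE_G$ is $\Delta_0=2\log n/\log\log n$. Second, you propose a ``small constant $\ell$'' while simultaneously claiming that a depth-$\ell$ directed BFS from $x$ reaches $\Omega(n)$ vertices. Since out-degrees in $D$ are at most $3$, a depth-$\ell$ BFS reaches at most $3^\ell$ vertices; reaching $\Omega(n)$ would force $\ell=\Omega(\log n)$, at which point the reverse count $\overline{N^-_x}$ (bounded only by $\Delta_0^\ell$ in the worst case) blows up to $n^{\Omega(\log\log n)}$ and the ratio is lost. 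In short, no single choice of $\ell$ makes both of your target bounds $\overline{N^+_x}=\Omega(k)$ and $\overline{N^-_x}=O(n(\log\log n)^3)$ hold.

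The paper fixes this by taking $\ell=\lceil 10\log_\tau\log n\rceil$ with $\tau=3-1/\sqrt{\log n}$, so that both the forward and backward $\ell$-neighbourhoods of $x$ have size on the order of $\tau^\ell\approx\log^{10}n$ (polylogarithmic, not linear). One then needs two-sided concentration for these neighbourhoods: this is exactly the content of $\cE_D=\Psi_{\to K}\cup\Psi_{\from V}$, proved via moment-generating-function estimates and Azuma--Hoeffding. The forward reach into $K$ is then $\approx(k/n)\log^{10}n/(\log\log n)^2$ and the backward reach from $V$ is $\lesssim (\log\log n)\log^{10}n$; the $\log^{10}n$ factors cancel and the $(\log\log n)^3$ loss is precisely the product of the two deviation slacks, not a $\Delta^{\ell-1}$ path-count. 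Two further points you elide: the paper reverses along \emph{shortest} paths (so degrees in $\Sigma$ are controlled by vertex counts rather than path counts), and one must separately handle the atypical digraphs and the vertices $x$ whose $\ell$-neighbourhood is exceptional, which requires the layered sets $V_1,V_2,V_3$ and the bound on $|I_\cX|$ to control $V_3$.
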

\noindent
Lemma~\ref{lem:uni} is proved in Section~\ref{sec:uni}.
It follows from this Lemma that \whp\ the number of elements of \(K \setminus L\) found among the first \(i\) 
vertices queried is at least \( i/ ( \log^{1/2} n \cdot 8000 (\log \log n)^3 ) \) for, say, \(i \ge \sqrt{n}\).
Indeed, as we explore $\eee$ we can ignore the $o(n)$ vertices in $I_{\cX}\cup O_{\cX}$ and the probability a 
queried vertex is in \( K \setminus L \) is at 
least
\[ \frac{2}{3} \cdot \frac{1}{ 4000 \log^{1/2} n ( \log \log n)^3 } \]
as the probability that the queried vertex is among the `bad' vertices described in Lemma~\ref{lem:uni} is
at most 1/3.  Thus, the number of elements of \( K \setminus L \) identifies among the first \(i\) vertices 
queried 
%
%Then \whp\ we are in 
%a part of the partition where in the space of possible digraphs $D$, the number of vertices in $(K\setminus L)\cap END(x)$ 
dominates $Bin(i,1/(6000 (\log \log n)^3 \log^{1/2}n ))$.
Let \( N \) be the number of vertices queried for membership in \(K \setminus L\) 
before completion of
the Extension-Rotation Procedure.  Assuming the rare events discussed above do not occur, we have
\[ \frac{N} { \log^{1/2} n \cdot 8000 (\log \log n)^3 } \le \frac{1200 n}{ \log n}. \]
and so
\[ N = O \left( n \cdot \frac{ ( \log \log n)^3}{ \log^{1/2} n} \right). \]
Therefore, \whp\ we have \( |O_\cX| = o(n) \) throughout the execution of 
Extension-Rotation (inductively justifying our application of Lemma~\ref{lem:uni}).

%%%%%%%%%%%%%%%%%%%%%%%%%%%%%%%%%%%
%%%%%%%%%%%%%%%%%%%%%%%%%%%%%%%%%%%

\section{A simple 2-matching 
with few components}\label{sec:2matching}

We apply the following consequence of the Tutte-Berge matching formula 
(see Theorem~30.7 in Shrijver \cite{Sch}).  If \(G = (V,E) \) is a graph and \( A,B \) are disjoint subsets
of \(V\) then we let \(e(A,B) \) denote the number of edges in \(E\) that intersect both \(A\) and \(B\)
and \( e(A) \) denote the number of edges within \(A\).  The {\bf size} of a simple 2-matching 
\(F\) is the number of 
edges in \(F\).
\begin{theorem}
\label{thm:tb}
Let \(G = (V,E) \) be a graph.  The maximum size of a simple 2-matching in \(G\) is equal 
to the minimum value of 
\begin{equation}
\label{eq:tutte}
|V| + |U| - |S| + \sum_X \left\lfloor \frac{ e(X,S)}{2} \right\rfloor, 
\end{equation}
where \(U\) and \(S\) are disjoint subsets of \(V\), with \(S\) an independent set, and where \(X\)
ranges over the components of \(G - U -S \).
\end{theorem}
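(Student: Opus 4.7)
The plan is to derive Theorem~\ref{thm:tb} from the classical Tutte--Berge formula for ordinary matchings via the standard graph-theoretic reduction of simple $2$-matchings to $1$-matchings. Given $G=(V,E)$ (assume $\d(G)\ge 2$ for convenience), I build an auxiliary graph $H$ with a ``gadget'' $\Gamma_v$ for each $v\in V$: $\Gamma_v$ contains $d(v)$ port vertices $p_v^e$ (one for each edge $e\ni v$) together with $d(v)-2$ filler vertices each joined to every port of $\Gamma_v$; for each edge $e=\{u,v\}\in E$ I add a single port--port edge $\{p_u^e,p_v^e\}$ in $H$. A direct count shows that in any maximum matching of $H$ the fillers are matched greedily to ports, so exactly two ports per gadget are free to be matched across gadgets via port--port edges. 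The selected port--port edges form a simple $2$-matching of $G$, and conversely every simple $2$-matching of $G$ extends to a matching of $H$ by matching the fillers greedily. This bijection gives $\nu(H)=2|E|-2|V|+\mu(G)$, where $\mu(G)$ is the maximum simple $2$-matching size.

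Applying Tutte--Berge to $H$, $\nu(H)=\tfrac12(|V(H)|-d(H))$ with $d(H)=\max_W(o(H-W)-|W|)$ over $W\subseteq V(H)$. Solving for $\mu(G)$ gives $\mu(G)=|V|-d(H)/2$, and so the theorem reduces to the identity
\[
d(H) \;=\; \max_{(U,S)}\Bigl(2|S|-2|U|-\sum_X\bigl(e(X,S)-[e(X,S)\text{ odd}]\bigr)\Bigr),
\]
the maximum being over disjoint $U,S\subseteq V$ with $S$ independent and $X$ over the components of $G-U-S$; this expression equals $2\max_{(U,S)}\bigl(|S|-|U|-\sum_X\lfloor e(X,S)/2\rfloor\bigr)$ by parity rearrangement. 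I establish this identity by showing that every optimal $W$ may be replaced, without decreasing $o(H-W)-|W|$, by a \emph{gadget-compatible} set whose intersection with each $\Gamma_v$ is either the whole gadget (giving $v\in U$), only the fillers of $\Gamma_v$ (giving $v\in S$), or empty ($v\notin U\cup S$). Independence of $S$ is then forced by a second exchange: if $v,w\in S$ are adjacent in $G$ then the port--port edge $\{p_v^e,p_w^e\}$ survives in $H-W$, and pulling one gadget out of $S$ does not decrease the objective.

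Once $W$ is gadget-compatible, the components of $H-W$ correspond to components $X$ of $G-U-S$ enlarged by the ports of $S$-neighbours attached via port--port edges; a direct parity count contributes one odd component per $X$ plus one extra odd component whenever $e(X,S)$ is odd, which (after cancelling the deterministic contributions from $|V(H)|$ and $|W|$) yields the displayed formula. The easy inequality (that every admissible $(U,S)$ bounds $\mu(G)$ from above) can be proved more directly by writing $\mu(G)=|V|-p(F^*)$ for a maximum simple $2$-matching $F^*$, bounding edges of $F^*$ between $S$ and each component $X$ by $2\lfloor e(X,S)/2\rfloor$ plus a parity remainder, and charging the residual $S$-deficit to path endpoints of $F^*$. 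The main obstacle is the gadget-compatibility reduction: the exchange arguments that force $W$ into one of three canonical intersections with each $\Gamma_v$, and the careful parity bookkeeping that matches odd components of $H-W$ to the floors $\lfloor e(X,S)/2\rfloor$, both require delicate casework because fillers and ports interact asymmetrically with the component parities in $H-W$. With these steps carried out, the theorem follows from Tutte--Berge applied to $H$.
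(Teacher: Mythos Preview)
The paper does not prove Theorem~\ref{thm:tb}; it simply quotes it as Theorem~30.7 of Schrijver's book and uses it as a black box. So your proposal is not competing with any argument in the paper---you are supplying a proof where the paper gives only a citation. The reduction you outline (Tutte's $b$-matching gadget: replace each vertex $v$ by $d(v)$ ports and $d(v)-2$ fillers, then apply Tutte--Berge to the auxiliary graph $H$) is the standard route to this formula, and the identity $\nu(H)=2|E|-2|V|+\mu(G)$ is correct.

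There is, however, a concrete error in your canonicalisation. For $v\in U$ you propose $W\cap\Gamma_v=\Gamma_v$ (the whole gadget). With that choice one computes
\[
o(H-W)-|W|
= 2|S|-2|U|-2\sum_X\Bigl\lfloor\tfrac{e(X,S)}{2}\Bigr\rfloor
\;-\;2\sum_{v\in U}\bigl(d(v)-2\bigr),
\]
which is strictly smaller than the target whenever some $v\in U$ has $d(v)>2$, so this $W$ never realises the maximum and the min--max equality fails. The correct canonical intersection for $v\in U$ is the set of \emph{ports only}: then the $d(v)-2$ fillers of $\Gamma_v$ become isolated odd components, $|W\cap\Gamma_v|=d(v)$, and the extra term disappears. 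With that fix the count goes through: the odd components of $H-W$ are (i) the isolated fillers of $U$-gadgets, (ii) one isolated $S$-port for each $S$--$U$ edge, and (iii) for each component $X$ of $G-U-S$ a single component whose size is congruent to $e(X,S)$ modulo~$2$; summing gives exactly $2|S|-2|U|-2\sum_X\lfloor e(X,S)/2\rfloor$. Your verbal description of the parity count (``one odd component per $X$ plus one extra whenever $e(X,S)$ is odd'') does not match this and should be revised. Finally, the assumption $\delta(G)\ge2$ is not cosmetic: with $d(v)\le1$ the gadget has a negative number of fillers and the construction breaks; you need either to treat such vertices separately or to note that for the paper's application $\delta(G_2)\ge3$ so the issue does not arise.
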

\noindent
  We make extensive use of the following observation:
\begin{equation}
\label{eq:easy}
S\subseteq \binom{[n]}{2} \ \ \ \ \ \Rightarrow \ \ \ \ \ 
\Pr(S\subseteq E(G_{3-out}))\leq \bfrac{6}{n}^{|S|}.
\end{equation}
This is easily seen to be true for $|S|=1$ and after this, knowing
that $S'\subseteq E(G_{3-out})$ only reduces the probability that
$e\notin S'$ is also in $E(G_{3-out})$.

We define \( \cE_{\rm cycles} \)
to be the collection of graphs on vertex set \([n] \) for which there are at least \( n^{3/4} \) vertex 
sets \( C \) such that \( |C| \le \frac{ \log n}{3.9} \), \( G[C] \) is connected and \( G[C] \) has at least
\( |C| \) edges.
\begin{lemma}
\label{lem:cycles}
With high probability \( G_2 \not\in \cE_{\rm cycles}\).
\end{lemma}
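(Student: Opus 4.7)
The plan is to apply the first moment method. Since $G_2$ is a subgraph of $G_{3\text{-out}}$ (one obtains $D_{3\text{-out}}$ by attaching to $D_2$ the arcs of $A_3$, whose tails lie in $K\setminus L$), any upper bound on the probability that a fixed edge set is contained in $G_{3\text{-out}}$ transfers to $G_2$.  In particular, the bound \eqref{eq:easy} applies with $G_{3\text{-out}}$ replaced by $G_2$.

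Let $X$ be the number of vertex sets $C\subseteq[n]$ with $c:=|C|\le \log n/3.9$ such that $G_2[C]$ is connected and contains at least $c$ edges.  If such a $C$ occurs then $G_2[C]$ contains a spanning tree on $C$ together with at least one further edge within $C$. By Cayley's formula there are $c^{c-2}$ spanning trees on the labelled vertex set $C$, and there are at most $\binom{c}{2}$ candidates for the additional edge.  Applying \eqref{eq:easy} to the resulting set $S$ of $c$ specified edges gives
\[
\Pr(C\text{ is bad})\;\le\;c^{c-2}\binom{c}{2}\brac{\frac{6}{n}}^{c}\;\le\;\brac{\frac{6c}{n}}^{c}.
\]

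Using $\binom{n}{c}\le(en/c)^c$ and summing,
\[
\Exp[X]\;\le\;\sum_{c=2}^{\lfloor \log n/3.9\rfloor}\binom{n}{c}\brac{\frac{6c}{n}}^{c}\;\le\;\sum_{c=2}^{\lfloor \log n/3.9\rfloor}(6e)^{c}.
\]
The sum is dominated by the top term, which is $(6e)^{\log n/3.9}=n^{\log(6e)/3.9}$.  Since $\log(6e)=1+\log 6<2.792<2.925=\tfrac{3}{4}\cdot 3.9$, there exists $\delta>0$ with $\Exp[X]=O(n^{3/4-\delta})$.  Markov's inequality then yields $\Pr(X\ge n^{3/4})\le \Exp[X]\cdot n^{-3/4}=o(1)$, proving the lemma.

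I expect the only delicate points to be: (i) the observation that $G_2\subseteq G_{3\text{-out}}$ so that \eqref{eq:easy} may be used (this is really just bookkeeping about how $D_1$, $A_2$ and $A_3$ fit together to reconstruct $D_{3\text{-out}}$), and (ii) verifying that the numerical constant $3.9$ is indeed chosen so that $(6e)^{c}$ at $c=\log n/3.9$ sits safely below $n^{3/4}$.  Once those are in place the rest is a single first-moment calculation.
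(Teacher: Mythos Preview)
Your proof is correct and follows essentially the same approach as the paper: both arguments bound the expected number of such sets $C$ by choosing a spanning tree ($c^{c-2}$ ways) plus one extra edge ($\binom{c}{2}$ ways), applying the edge bound \eqref{eq:easy} to obtain $\sum_c (6e)^c = O\big((6e)^{\log n/3.9}\big)$, and finishing with Markov. The paper works directly in $\Gamma_D$ for $D\in\Omega$ rather than invoking $G_2\subseteq G_{3\text{-out}}$ explicitly, and omits your numerical check that $\log(6e)<\tfrac{3}{4}\cdot 3.9$, but the substance is identical.
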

\begin{proof}
We compute an upper bound on the expected number of small complex components in \( \G_D \) 
where \(D\) is drawn uniformly at random from 
\( \Omega \):
\[ \sum_{c=3}^{ \frac{\log n}{3.9} } \binom{n}{c} c^{c-2} \binom{c}{2} \bfrac{ 6}{n}^c \le 
\sum_{c=3}^{ \frac{\log n}{3.9}} ( 6e)^c = O \left( (6e)^{ \frac{ \log n}{3.9}} \right). \]
The Lemma then follows from Markov's inequality.
\end{proof}

\begin{lemma}
\label{lem:ind}
With high probability \( \alpha( G_2) \le 0.415 n \).
\end{lemma}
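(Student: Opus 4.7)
The plan is a standard first-moment (union-bound) argument. Fix $\alpha = 0.415$ and let $X$ be the number of independent sets in $G_2$ of size $\lceil \alpha n \rceil$; the goal is to show $\mathbb{E}[X] = o(1)$ and then apply Markov's inequality. The key structural input is that in $D_2 = D_1 + A_2$, every vertex $v \in V \setminus (K \setminus L)$ contributes three uniformly-random out-arcs, while each $v \in K \setminus L$ contributes only two. Since $|K \setminus L| \le k = \lceil n / \log^{1/2}n \rceil = o(n)$, almost every vertex carries the full three random out-arcs.

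For a fixed $I \subseteq [n]$ with $|I| = \lceil \alpha n\rceil$, the event that $I$ is independent in $G_2$ is exactly the event that every out-arc of $D_2$ with tail in $I$ has head in $V \setminus (I \setminus \{v\})$. Conditioning on $K$ and $L$ and using the independence and uniformity of the arcs,
\[
\Pr[I \text{ indep in } G_2 \mid K, L] \le \left(\frac{n - |I| + 1}{n}\right)^{3|I| - |I \cap (K\setminus L)|} \le (1-\alpha)^{3\alpha n(1-o(1))}.
\]
Summing over the $\binom{n}{\alpha n}$ choices of $I$ and applying Stirling's formula gives
\[
\Ex[X] \le \binom{n}{\alpha n}(1-\alpha)^{3\alpha n(1-o(1))} = \exp\!\bigl[n\bigl(H(\alpha) + 3\alpha\ln(1-\alpha)\bigr) + o(n)\bigr],
\]
where $H(\alpha) = -\alpha\ln\alpha - (1-\alpha)\ln(1-\alpha)$. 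Provided that $H(0.415) + 3(0.415)\ln(0.585) < 0$, we conclude $\Pr[\alpha(G_2) \ge \alpha n] = o(1)$.

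The main obstacle is that the numerical inequality is quite tight: the function $H(\alpha) + 3\alpha\ln(1-\alpha)$ vanishes near $\alpha \approx 0.42$, so the margin at $\alpha = 0.415$ is slim. If the naive first-moment calculation does not yield strict negativity, the argument can be tightened in two ways. First, Lemma~\ref{lem:cycles} lets us restrict attention to graphs with no small complex subgraphs, ruling out certain near-independent configurations. Second, we can sharpen the out-arc count by conditioning on the structure of $D_1$: $L$ is typically of size $\approx ke^{-3}$ (those $v \in K$ with in-degree $0$ in $D_1$), and the out-arcs from $L$-vertices provide additional constraints on $I$. Averaging over $K$ (uniform on $\binom{[n]}{k}$) using concentration of $|I \cap (K \setminus L)|$ around its mean $\alpha |K \setminus L|$ then produces an $o(n)$ improvement to the exponent sufficient to cross the threshold.
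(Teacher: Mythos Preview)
Your proposal has a concrete numerical gap. The inequality $H(0.415)+3(0.415)\ln(0.585)<0$ that you need is \emph{false}: a direct computation gives
\[
H(0.415)\approx 0.6786,\qquad 3\cdot 0.415\cdot\ln(0.585)\approx -0.6673,
\]
so the exponent is about $+0.011$, not negative. (Equivalently, the zero of $\alpha\mapsto H(\alpha)+3\alpha\ln(1-\alpha)$ is near $\alpha\approx 0.4328$, well above $0.415$.) Thus the naive first moment bound $\binom{n}{\alpha n}(1-\alpha)^{3\alpha n}$ is $e^{cn}$ with $c>0$ at $\alpha=0.415$, and Markov gives nothing. Your two proposed fixes cannot rescue this: the small-cycle lemma is irrelevant to counting independent sets, and any refinement coming from the $K,L$ structure affects only $O(k)=o(n)$ arcs and therefore shifts the exponent by $o(n)$, which cannot overcome a $\Theta(n)$ deficit.

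The missing idea, and the one the paper uses, is that a \emph{maximum} independent set is automatically a dominating set: if $I$ is maximum and $v\notin I$, then $v$ must have a neighbor in $I$. This imposes, for each of the $n-i$ vertices outside $I$, the additional event ``$v$ has an edge to $I$'', contributing (after conditioning on $I$ being independent) a further factor of roughly
\[
\bigl[\,1-(1-x)^3 e^{-3x/(1-x)}\,\bigr]^{\,1-x}
\]
per vertex at $x=i/n$. With this extra factor the paper's function
\[
f(x)=\frac{(1-x)^{3x}\bigl[1-(1-x)^3 e^{-3x/(1-x)}\bigr]^{1-x}}{x^x(1-x)^{1-x}}
\]
does drop strictly below $1$ for $x\ge 0.415$ (barely: $f(0.415)\approx 0.998$), and the union bound then succeeds. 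Without the dominating-set observation the first-moment argument simply does not close at $0.415$.
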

\begin{proof}
Let \( Z \) be the size of the largest independent set in \( G_1 = \G_{D_1} \).  Applying the union
bound (and noting that a maximum independent set is also a dominating set) we have
\begin{equation*}
\begin{split}
\Pr( Z \ge \ell ) & \le \sum_{ i \ge \ell} \binom{n}{i} \left( 1 - \frac{i}{n} \right)^{3i 
- k}
\left[ 1 - \left( 1 -\frac{1}{n-i} \right)^{3i} \left( 1 - \frac{i}{n} \right)^3 \right]^{n-i 
- k }
%\\
%& \le \frac{1}{ \bfrac{k}{n}^{k}  \bfrac{ n-k}{n}^{n-k}} 
\end{split}
\end{equation*}
We checked numerically that the function
\[ f(x) = \frac{ ( 1 -x)^{3x} \left( 1 - (1-x)^3 e^{-3x/(1-x)} \right)^{1-x} }{ x^x (1-x)^{1-x} } \]
is strictly less than one for \( x \ge 0.415 \). We conclude that \( Z \le 0.415 n \) with high
probability.  As \( G_1 \) is a subgraph of \( G_2\), we have the desired result.
\end{proof}

Rather than work with the event implicit in Theorem~\ref{thm:tb} 
directly, we consider a 
weaker event.
Let \( \cG \) be the event that there exist disjoint vertex sets \( U,R \) such that
\begin{itemize}
\item[(i)] \( V \setminus (U \cup R) \) can be partitioned into sets \( S,T \) such that
\( G[T] \) is a forest and \( G[S] \) is an independent set.
\item[(ii)] \( |S| \ge \max\{ |U|, 2n/( \log n ) \} \),
\item[(iii)] \( e( V \setminus( U \cup R)) + e( R, V \setminus( U \cup R)) 
= e(S \cup T) + e( R, S \cup T) \le |V| + |S| - 3|U| - |R| \).
\end{itemize}
We now argue that \( G \not\in \cG \cup \cE_{\rm cycles} \) implies that \(G\) has a 
simple 2-matching with at most
\( 6n/ \log n \) components.  Suppose \( G \not\in  \cG \cup \cE_{\rm cycles} \). 
Let \(F\) be a simple 2-matching of maximum size.  Since 
\(G \not\in \cE_{\rm cycles} \), \(F\) has at most
\( 4n/ \log n\)  cycle components, and therefore the number of components in \(F\)
is at most \( 4n/ \log n\) plus \(n\) minus the size of \(F\).  So, it suffices
to show that \(G\) has a simple 2-matching with at least \( n - 2n/ \log n \) edges.
Let \(U\), \(S\) be an arbitrary pair of disjoint subsets of \(V\) where
\(S\) is an independent set.  
We may assume \( |S| \ge \max\set{2n/ \log n, |U|} \) (otherwise the expression in (\ref{eq:tutte}) 
is clearly at least $n-2n/\log n$).
For the given \(U\) and \(S\), let \(R\) be the set of vertices in non-tree 
components in \( G_2[ V \setminus (U \cup S) ] \).  Set \(T = V \setminus (U \cup S \cup R) \).  Note that
the graph induced on \(T\) is a forest.  Suppose this is a forest with \(p\) components.  Now, since 
\( G \not\in \cE_{\rm cycles} \) implies
that there are at most \( 4n/ \log n \) components in \( G[R] \), we have
\[ \sum_{X} \left\lfloor  \frac{e(X,S)}{2} \right\rfloor  \ge \frac{ e( S,T) + e(S,R) - p 
-  \frac{4n}{\log n}}{2}. \]
On the other hand, \( G \not\in \cG\) implies that condition (iii) does not hold and we have
\begin{equation*}
\begin{split}
e(S,T) + e(S,R) & > |V| + |S| - 3|U| -|R| - e(T) \\
& = |V| + |S| - 3|U| -|R| - |T| + p  \\
& = 2|S| - 2|U| + p .
\end{split}
\end{equation*}
Therefore,
\[ \sum_{X} \left\lfloor  \frac{e(X,S)}{2} \right\rfloor >  |S| - |U| - \frac{2n}{ \log n}.  \]
Applying Theorem~\ref{thm:tb}, we see that there exists a simple 2-matching \(F\) in \(G\) with at 
at least \(n - 2n/ \log n\) edges.

It remains to show that \( G_2 \not\in \cG \) with high probability.
We apply the first moment method.
Throughout these computations we take \(K\) to be a fixed set (the set \(L\), however, 
is not determined).  For ease of notation we set \( u = |U|, r= |R|, s =|S|,\) etc.
We consider 3 cases based on the value of \( u/n \).

{\bf Case 1.} \( u \le \frac{n}{e^{18}} \).

\noindent
Here we take special care with the vertices in \( S \cup T \) that are in \(K\).
For disjoint sets \(U ,Z ,W \) such that \(U, Z \subseteq V \) and 
\( W \subseteq K \) let \( \cG_{U,Z,W} \) be 
the event that conditions (i), (ii) and (iii) are satisfied with 
\[ S \cup T = Z \cup W \ \ \ \ \ \ \  ( S \cup T) \cap L = Z \cap K \ \ \ \ \ \ \
(S \cup T) \cap (K \setminus L) = W. \]
In words, \( Z \) is the set of vertices in \( S \cup T \) of out-degree \(3\) 
in \( D_1 + A_2 \) and \(W\) is the set of vertices in \(S \cup T \) of out-degree 2 in \( D_1 + A_2 \).
Note that this event includes conditions on arcs that point at the 
vertices in \( Z \cap K \) and \(W\); 
in particular, there is at least one arc directed at each vertex in \( W \).
Let \( \alpha \) be the number of arcs directed from \( Z \cup W = S \cup T \)
to \( S \cup T \cup R \) and let
\( \beta \) be the number of edges that are directed 
from \(Z \cup W = S \cup T \) to \(U\).  Finally, let \( \gamma \) be the number of arcs directed 
from \(R= V \setminus (U \cup S \cup T)\) into vertices in \(W\).  

Before proceeding with the calculation, we establish an upper bound on \( \alpha \) 
(and hence a lower bound on \( \beta \)).  Note that, as we assume 
condition (iii) is satisfied, we have
\begin{multline*}
\alpha + \gamma \le  e( S \cup T ) + e( R, S \cup T ) \le |V| + |S| - 3|U| - |R| 
= 2s -2u +t. 
\end{multline*}
Therefore
 \[ \alpha \le 2s-2u+t - \gamma  \ \ \ \text{ and } \ \ \ 
\beta \ge s + 2t - w + 2u + \gamma . \]
The latter follows from $\a+\b= 3s+3t-w$.

For fixed \( U,Z,W \) we have
\begin{eqnarray}
%\begin{split}
 \Pr( \cG_{U,Z,W}) & \le & \sum_{\beta, \gamma}  \binom{ 3(s+t) - w}{\beta} \bfrac{u}{n}^\beta 
\bfrac{ 3u + \alpha }{n}^{w - \gamma}  \label{ins} \\
& \le & 9n^2 \cdot 2^{ 3(s+t) -w} \bfrac{ u}{n}^{s+2t + 2u} \bfrac{ u + 2s+t}{ u}^{w - \gamma}. \nonumber
%\end{split}
\end{eqnarray}
Note that the last term in (\ref{ins}) accounts for the condition that there must be an arc directed into 
each vertex in \(W\):  There are at least \( |W| - \gamma \) vertices in \(W\) that are not `covered' by vertices
in \(R\), and each of these vertices must be covered by one of the arcs out of \(U\) or one of the \( \alpha \) arcs
out of \( S \cup T \) that point to vertices in \( S \cup T \cup R \).

We bound the probability that there exists a pair \(U,R\) satisfying the conditions of the event \( \cG \) with 
\( |U| \le n/e^{18} \) by summing over all possible choices of \( U,Z \) and \(W\):
\begin{equation}
\begin{split}
Pr\left( \bigcup_{ U,Z,W } \cG_{U,Z,W} \right) 
& \le \sum_{u,z,w } \binom{n}{u} \binom{n}{z} \binom{k}{w}
\\
& \hskip1.5cm \cdot
 9n^2 2^{ 3(s+t) -w} \bfrac{ u}{n}^{s+2t + 2u} \bfrac{ u + 2s+t}{ u}^{w - \gamma} \\
& \le 9n^2 \sum_{u,z,w} \bfrac{ne}{u}^u \bfrac{ ne}{ s+t-w}^{s+t-w} \bfrac{ ke}{w}^w \\
& \hskip2cm \cdot
8^{s+t} \bfrac{1}{2}^w \bfrac{u}{n}^{s+2t+2u}  \bfrac{ 3(s+t)}{ u}^{w} \\
& \le  9n^2 \sum_{u,z,w}  \bfrac{3k}{2n}^w \bfrac{ eu}{n}^u \bfrac{ 16e u}{s+t}^{s+t} \bfrac{s+t}{u}^w.
\label{eq:smallu}
\end{split}
\end{equation}
Note that to get the final summand we dropped a factor $(u/n)^t$ and we used the bound
\[ \bfrac{ s+t}{ s+t-w}^{s+t-w} \bfrac{ s+t}{w}^w \le 2^{s+t}. \]
(This can be viewed as a special case of equation (\ref{ineq}), which is
used extensively in Section~\ref{sec:extrot}.)
We bound the final summand in (\ref{eq:smallu}) by considering two subcases.  Recall that we assume 
\( s> 2n/ \log n \).  If \( s+t \ge 32eu \) then we write
\[ \bfrac{ 16e u}{s+t}^{s+t} \bfrac{s+t}{u}^w  \le  \bfrac{ 1}{2}^{s+t-w} (16e)^w . \]
If \(w\) is a positive proportion of \( s+t\) then the \( \frac{3k}{2n} \) term suffices to 
establish the desired upper bound, and otherwise the \( (1/2)^{s+t-w} \) term suffices.  If, on the other
hand, \( s+t< 32 e u \) then $\bfrac{3k}{2n}^w\bfrac{s+t}{u}^w\leq 1$ and we write
\[ \bfrac{ eu}{n}^u \bfrac{ 16e u}{s+t}^{s+t} \le \left( e^{17} \frac{u}{n} \right)^u, \]
and again we have the desired upper bound.

\noindent
{\bf Case 2.} \( n/ e^{18} \le u \le 0.015n \).

Suppose first that \( s+t = n-u-r \ge 8eu \).  
As above, we let \( \beta \) be the number of arcs directed 
from \( S \cup T \)
to \( U \). We have \( \beta \ge s + 2t + 2u - o(n) \).
The probability that there exist \(U,R\) that satisfy condition (iii) with
\( n - |U| - |R| \ge 8e|U|\) and \( n/e^{18} \le |U| \le 0.015 n \) is 
bounded above by (we write $n-u-r=s+t$) 
\begin{equation*}
\begin{split}
\sum_{u,r,\beta} \binom{n}{u} \binom{n}{s+t} \binom{3(s+t)}{ \beta} \bfrac{u}{n}^\beta 
& \le e^{o(n)} \sum_{u,r} \bfrac{ ne}{u}^u \bfrac{ ne}{s+t}^{s+t} 2^{3(s+t)}  \bfrac{u}{n}^{s + 2t + 2u} \\
& \le e^{o(n)} \sum_{u,r} \bfrac{ u}{n}^t \bfrac{ eu}{n}^{u} \bfrac{ 8eu}{s+t}^{s+t}.
\end{split}
\end{equation*}
Note that the \(  \bfrac{ eu}{n}^{u} \)
term is exponentially small and dominates the $e^{o(n)}$ term and the other terms are at most 1.

Suppose now that \( s +t = n-u-r < 8eu \).  Here we add the 
consideration of the edges from \(R\) to \(S \cup T\) to the previous bound.  Let \( \gamma \) 
be the number of arcs directed from \(R \) to \( S \cup T \). Now we have 
\( \beta  \ge s + 2t + 2u + \gamma - o(n) \).  The probability that there exist \(U,R\) 
that satisfy condition (iii) with
\( n - |U| - |R| < 8e|U|\) and \( n/e^{18} \le |U| \le 0.015 n \) is 
bounded above by 
\begin{equation}
\begin{split}
\label{eq:foralan}
\sum_{u,r,\gamma,\beta} & \binom{n}{u} \binom{n}{s+t} \binom{3(s+t)}{ \beta} \bfrac{u}{n}^\beta 
\binom{3r}{ \gamma} \bfrac{s+t}{n}^\gamma \left( 1 - \frac{s+t}{n} \right)^{3r - \gamma}\\ 
& \le e^{o(n)} \sum_{u,r,\gamma} \bfrac{ ne}{u}^u \bfrac{ ne}{s+t}^{s+t} 2^{3(s+t)}  
\bfrac{u}{n}^{s + 2t + 2u + \gamma} \\
& \hskip2cm \times
\bfrac{3re}{ \gamma}^\gamma \bfrac{s+t}{n}^\gamma \left( 1 - \frac{s+t}{n} \right)^{3r - \gamma}.
%& \le \sum_{u,r} \bfrac{ u}{n}^t \bfrac{ eu}{n}^u \bfrac{ 8eu}{s+t}^{s+t}.
\end{split}
\end{equation}
We write
$$
\left( \frac{  3re \cdot \frac{ s+t}{n} \cdot \frac{u}{n}}{ \gamma} \right)^\gamma 
\left( 1 - \frac{s+t}{n}  \right)^{3r-\gamma} 
\le e^{3r \frac{ s+t}{n} \frac{u}{n} }
 e^{ - \frac{s+t}{n}( 3r - \gamma)}  \le e^{ - \frac{ s+t}{n} ( 3n - 6u - 3t - 3s - \gamma )}.  
$$
Since \( 0 \le \alpha \le 2s-2u + t - \gamma \), we have \( \gamma \le 2s - 2u + t  \) and
\[ 6u + 3t + 3s + \gamma \le 4u + 4t + 5s \le (4 + 5 \cdot 8e ) u \le 1.99 n. \]
So, we can bound the expression in (\ref{eq:foralan}) by
\begin{equation*}
\begin{split}
e^{o(n)}\sum_{ u,r} \bfrac{ ne}{u}^u \bfrac{ ne}{s+t}^{s+t} \bfrac{8}{e}^{s+t} 
\bfrac{u}{n}^{s + 2t + 2u} &
\le e^{o(n)}\sum_{u,r} \bfrac{eu}{n}^u  \bfrac{u}{n}^{t} \bfrac{ 8u  }{s+t}^{s+t} \\
& \le e^{o(n)}\sum_{u,r} \bfrac{eu}{n}^u   e^{  u \bfrac{8}{e } } 
\end{split}
\end{equation*}
As \( 0.015 \cdot e \cdot e^{8/e} < 1 \), we have the desired bound.

\noindent
{\bf Case 3.} \( 0.015 n \le u \).

Let  \(U,R\) (and \( S \cup T = V \setminus( U \cup R) \)) be fixed disjoint sets such that 
\( |U| \ge 0.015 n \).  Let \( \alpha \) be the number of
arcs directed from \( S \cup T \) to \( S \cup T \cup R \) and \( \z \) 
be the number of arcs directed from \( R \) to \(S \cup T \).  
The probability that there exist such sets \(U,R\) with the specified number of arcs is bounded above
by
\begin{multline}
\label{eq:another4alan}
\sum_{u,r,\alpha, \z} 
\binom{n}{u} \binom{n-u}{r} \binom{ 3(n-u-r)}{ \alpha}\left( 1 - \frac{u}{n} \right)^{ \alpha }
 \bfrac{u}{n}^{ 3(n-u-r) -\alpha} \cdot \\
 \binom{ 3r }{ \z } 
\left( 1 - \frac{u + r}{n}\right)^\z \bfrac{u+r}{n}^{3r - \z }.
\end{multline}
As we are bounding the probability 
of the event \( \cG \), we assume (in particular) that condition (iii) holds; that is, we assume,
\[ \alpha + \z = e(S \cup T) + e(S \cup T, R) \le n + s - 3u - r, \]
which implies
\begin{equation}\label{xx1}
 \alpha \le  n + s - 3u - r - \z = 2s - 2u +t - \z \le 2s+t-2u.
\end{equation}
Now the expression 
\begin{equation}\label{xx2}
 \binom{ 3(s+t)}{ \alpha} \left( 1 -\frac{u}{n} \right)^\alpha \bfrac{u}{n}^{-\alpha}
\end{equation}
is increasing for 
$\a\leq 3(1-u/n)(s+t)-u/n$. 
Given \eqref{xx1} and \( u \le s \le \alpha( G_2 ) \le 0.5 n \) (by Lemma~\ref{lem:ind}) we have
\begin{eqnarray*}
3(1-u/n)(s+t)-u/n-(2s+t-2u)&=&(1-3u/n)s+(2-3u/n)t+2u-u/n\\
&\geq&(1-3u/n)s+2u-u/n\\
&\geq&0.
\end{eqnarray*}
Thus in our range of interest, the expression \eqref{xx2} is increasing in $\a$.

Therefore, the probability that there exist disjoint sets 
\(U,R\) with \( |U| = u \) and \( |R| =r \) that satisfy conditions (i), (ii) and (iii) is bounded above by
\begin{multline}
\label{eq:final}
e^{o(n)} \sum_{u,r,\z} \binom{n}{u} \binom{n-u}{r} \binom{ 3(n-u-r)}{  n + s - 3u - r - \z }
\left( 1 - \frac{u}{n} \right)^{ n + s - 3u - r - \z }
 \bfrac{u}{n}^{2n -s -2r + \z - o(n)} \\\
 \binom{ 3r }{ \z } 
\left( 1 - \frac{u + r}{n}\right)^\z \bfrac{u+r}{n}^{3r - \z - o(n)},
\end{multline}
where we set \( s = \min \{ n -u-r, 0.415 n \} \).  Note that 
the only role \(s\) plays in (\ref{eq:another4alan}) is in the upper bound on \( \alpha \).  So, we 
take the maximum possible value of \( \alpha \) by taking the maximum value for \(s\) (and this has 
no impact on the rest of the expression).  Also note that we apply
Lemma~\ref{lem:ind} to set an absolute upper bound on the \(s\).  

We turn now to calculations. We re-name $u\to xn,r\to yn,\g\to
zn$. Then for $a\geq b$, we let $\Bin(a,b)=\frac{a^a}{b^b(a-b)^{a-b}}$ be a
replacement for a corresponding binomial.
We now define the function of three real variables
\begin{multline}\label{function}
g(x,y,z) = \Bin(1,x)\cdot \Bin(1-x,y)\cdot \Bin(3(1-x-y),\a)\cdot\Bin(3y,z)\cdot\\  
x^{ \beta(x,y,z)} \cdot (1-x)^{ \alpha(x,y,z)}
\cdot ( 1 - x -y)^z \cdot ( x+y)^{3y - z}
\end{multline}
where
\begin{gather*}
\alpha(x,y,z) = 1 + s(x,y,z) - 3x - y -z \ \ \ \ \ \ \ \ \
\beta(x,y,z) = 2 - s(x,y,z) - 2y + z  \\
s(x,y,z) = \min \{ 1 - x -y, 0.415 \}. 
\end{gather*}
Note that $y\leq 1-2x$ is equivalent to $u\leq s+t$.
\begin{claim}
\label{cl:calc}
If \( 0.015 \le x \le 0.415\), \( 0 \le y \le 1 - 2x \) and \( 0 \le z \le 3y \) then
\( g(x,y,z) \le 0.995 \).
\end{claim}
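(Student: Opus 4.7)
The plan is to set $h := \log g$ and bound $h$ from above by $\log(0.995) \approx -0.00501$ on the compact set $D = \{(x,y,z) : 0.015 \le x \le 0.415,\ 0 \le y \le 1 - 2x,\ 0 \le z \le 3y\}$. Since $s(x,y,z) = \min\{1 - x - y,\, 0.415\}$ is piecewise defined, I would split $D$ along the plane $x + y = 0.585$ into Case A ($s = 1 - x - y$, giving $\alpha = 2 - 4x - 2y - z$ and $\beta = 1 + x - y + z$) and Case B ($s = 0.415$, giving $\alpha = 1.415 - 3x - y - z$ and $\beta = 1.585 - 2y + z$). In both cases $\alpha + \beta = 3(1 - x - y)$, which is the combinatorially natural identity; $h$ is smooth in the interior of each subregion and continuous across the interface.

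Next I would cut the dimension by first-order conditions. Using $\tfrac{d}{db}\log\mathrm{Bin}(a,b) = \log\tfrac{a-b}{b}$, a direct calculation gives
\[
\frac{\partial h}{\partial z} = \log\frac{\alpha}{3(1-x-y)-\alpha} + \log\frac{3y-z}{z} + \log\frac{x(1-x-y)}{(1-x)(x+y)},
\]
and $\partial^2 h/\partial z^2$ is a sum of strictly negative terms of the form $-1/u - 1/(M-u)$, so $h(x,y,\cdot)$ is strictly concave on the open interval and has a unique maximizer $z^{\ast}(x,y)$ defined implicitly by $\partial h/\partial z = 0$. A completely analogous computation for $\partial h/\partial y$ (using that $\beta$ depends linearly on $y$ and $\alpha$ linearly on $y$) reduces the problem to a one-variable optimization in $x \in [0.015, 0.415]$. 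The interior critical points solve a small transcendental system, which I would solve numerically; maxima on the faces of $D$ (e.g.\ $y = 0$, $y = 1 - 2x$, $z = 0$, $z = 3y$) reduce to lower-dimensional problems handled by the same method.

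The final verification is inherently numerical, since the critical equations have no closed form. I would combine a grid search on $D$ with a Lipschitz estimate: on any compact subregion bounded away from the degenerate faces ($z = 0$, $z = 3y$, $y = 0$, $\alpha = 0$, $3(1-x-y) - \alpha = 0$), the gradient of $h$ is uniformly bounded by some $L$ computed in closed form, so a uniform grid of spacing $\delta$ on the order of $10^{-3}$ with every grid value below $\log(0.995) - L\delta$ certifies the bound. The main obstacle is the behavior of $h$ near the degenerate faces, where individual summands of $h$ blow up logarithmically even though $h$ itself remains bounded (the exponent of each blowing-up base vanishes at the same rate). Near each such face I would use a separate analytic argument, monotonicity in the normal direction together with the explicit limit of $h$ on the face, to cap $h$ by a value strictly below $\log(0.995)$ in a thin slab; the interior grid search then handles the rest. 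This is the delicate step; once the boundary strata are analytically controlled, the bulk numerical check is routine.
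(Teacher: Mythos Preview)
Your plan and the paper's actual treatment both end in a computer verification, but the routes differ. The paper does not pass to $h=\log g$ or seek critical points at all: it simply covers the region by cubes of side $0.00125$ and, on each cube, bounds $g$ from above by bounding \emph{each factor} of the product \eqref{function} separately using monotonicity. Because every factor (of the form $a^a/(b^b(a-b)^{a-b})$, $x^\beta$, $(1-x)^\alpha$, etc.) is continuous on the closed domain---in particular $t^t\to 1$ as $t\to 0^+$---this completely sidesteps the logarithmic blow-ups near the degenerate faces that you flag as ``the delicate step.'' Your Lipschitz-on-$h$ scheme is viable but buys exactly the boundary headache that the paper's factor-wise bounding of $g$ itself does not have; conversely, your concavity-in-$z$ observation is correct and could in principle cut the grid dimension, which the paper forgoes in favour of brute force. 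One caution: your claim that the computation for $\partial h/\partial y$ is ``completely analogous'' and reduces the problem to a one-variable optimisation is not obviously justified---after substituting $z=z^*(x,y)$ the second variation in $y$ involves \emph{both} arguments of $\Bin(3(1-x-y),\alpha)$ and of $\Bin(3y,z)$, and concavity in $y$ does not follow from the same one-line sign argument that worked for $z$. You would need either a genuine proof that $\max_z h$ is concave in $y$, or to abandon the dimension reduction and retain a two- (or three-) dimensional grid, at which point the paper's direct bounding of $g$ is the cleaner choice.
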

Note that Claim~\ref{cl:calc} implies that the expression in equation (\ref{eq:final}) 
is exponentially 
small for all relevant choices of \( r,u,\z \), and Lemma~\ref{lem:2match} follows.

It remains to prove Claim~\ref{cl:calc}.  Of course, this can be achieved by a 
direct assault
using calculus (e.g. concavity is useful).  However, the authors feel that a simple check
by computer is more compelling.  Posted at {\tt www.math.cmu.edu/$\sim$tbohman} the reader will
find a short
program in C that divides the 3-dimensional region into small cubes (side length 0.00125)
and bounds the function over each cube, thereby bounding the function over the entire region.
The numerical bounds on the expression in \eqref{function} are achieved 
by bounding each term in \eqref{function} individually over each sub-cube in a 
trivial manner.  Problems with numerical rounding are avoided

%Here use the bound
%\[ \binom{n}{u} \binom{n-u}{r} \binom{ 3(s+t)}{ \beta} \bfrac{n}{u}^{ \beta} 
%\left( 1 - \frac{u}{n} \right)^{ \alpha } \binom{ 3r }{ \gamma } 
%( 1 - u - r)^\gamma (u+r)^{3r - \gamma} \]
%It suffices to show that the function
%\[ g(x,y,z) = \frac{1}{ x^x y^y (1-x-y)^{1-x-y} } \cdot \frac{ (3(1-x-y))^{3(1-x-y)}}{1+s-3*u \]

%%%%%%%%%%%%%%%%%%%%%%%%%%%%
%%%%%%%%%%%%%%%%%%%%%%%%%%%%

\section{Rotations: Proof of Lemma~\ref{lem:extrot}}
\label{sec:extrot}

We begin by defining the event \( \cE_G \).  This event is 
comprised of three properties: large maximum degree, small dense sets 
and a more complicated non-monotone condition for larger sets. Set
\[ \Delta_0 = \frac{ 2 \log n}{ \log \log n}. \]
The event 
\begin{equation}\label{EG} 
\cE_G =\set{\D(G) \ge \D_0}\cup \cE_{\rm small} \cup  \cE_{\rm pieces}
\end{equation}
where \( \cE_{\rm small} \) is
the event that there is a vertex set \(S\) and an integer \(i\) such that 
\( 1 \le i \le 10 \), \( |S| < \frac{i}{4} \log n \) and the graph induced by \(S\) is
connected and spans at least \( |S| + i \) edges. $\cE_{\rm pieces}$ is defined immediately following the 
proof of Lemma \ref{lem:g1} below.

The events \( \D(G) \ge \D_0 \) and
\( \cE_{\rm small} \) are increasing and so we can bound the probability that there exists \( B \subseteq A_3 \) such that 
\( \G_{G_1 + A_2 + B} \) is in one of these events by working with 
\( \Omega \) itself.  
\begin{lemma}
\label{lem:g1}
If \( D \) is chosen uniformly at random from \( \Omega \) then
\[  \Pr( \D(\G_D) \ge \D_0 \ \vee \ \G_D \in \cE_{small} 
%\ \vee \ \G_D \in \cE_{\rm large} 
) = o(n^{-1/2}).  \]
\end{lemma}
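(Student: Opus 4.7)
The plan is to prove the two events separately via union bound, each with its own first moment calculation. Overall, the claim will follow from bounding each by $o(n^{-1/2})$ and adding.

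For the maximum degree, since each vertex $v$ has out-degree exactly 3 in $D$, its degree in $\Gamma_D$ is at most $3 + d^-_D(v)$. The in-degree $d^-_D(v)$ is stochastically dominated by $\mathrm{Bin}(3n, 1/n)$ because each of the $3n$ out-arcs independently lands at $v$ with probability $1/n$. A standard binomial tail bound gives
\[
\Pr(d^-_D(v) \ge \Delta_0 - 3) \le \binom{3n}{\Delta_0-3}\bfrac{1}{n}^{\Delta_0-3} \le \bfrac{3e}{\Delta_0-3}^{\Delta_0-3}.
\]
Plugging in $\Delta_0 = 2\log n/\log\log n$, the exponent $(\Delta_0-3)\log((\Delta_0-3)/(3e)) = (2-o(1))\log n$, so the per-vertex probability is at most $n^{-2+o(1)}$. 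The union bound over $n$ vertices yields $\Pr(\Delta(\Gamma_D) \ge \Delta_0) = n^{-1+o(1)} = o(n^{-1/2})$, which is comfortably within budget.

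For $\cE_{\text{small}}$, I would apply the first moment method, summing over $i \in \{1,\ldots,10\}$ and over possible sizes $s < \frac{i}{4}\log n$. For a fixed $S$ of size $s$, a connected subgraph on $S$ with at least $s+i$ edges is specified by choosing a spanning tree (at most $s^{s-2}$ options by Cayley's formula) together with at least $i+1$ additional edges from among the $\binom{s}{2}-(s-1)$ remaining. By \eqref{eq:easy}, the probability that any specified collection of $s+i$ edges is present in $G_D$ is at most $(6/n)^{s+i}$. This gives
\[
\E[\#S] \le \sum_{i=1}^{10}\sum_{s} \binom{n}{s} \cdot s^{s-2} \cdot \binom{\binom{s}{2}-s+1}{i+1} \cdot \bfrac{6}{n}^{s+i}.
\]
After applying $\binom{n}{s} \le (en/s)^s$ and $s^s/s! \le e^s$, the summand simplifies to roughly $\frac{(6e)^s s^{2i}}{n^i}\cdot O_i(1)$. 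Since the dominant contribution comes from $s$ near the upper limit $i\log n/4$, the main task is to check that $(6e)^{i\log n/4} \cdot (\log n)^{O(i)} / n^i$ is $o(n^{-1/2})$ in each case, so that the union over $i$ remains $o(n^{-1/2})$.

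The main obstacle lies in getting the $s=\Theta(\log n)$ tail sharp enough: the crude bound \eqref{eq:easy} must be paired with the careful constraint $s < \frac{i}{4}\log n$, which is precisely what ties the exponent $i\log(6e)/4$ to the rate of decay in $n$. For the tightest case one can replace \eqref{eq:easy} by directly counting arcs, using the sharper estimate $\Pr(\mathrm{Bin}(3s, s/n) \ge s+i) \le (3es/n)^{s+i}$ on the number of arcs landing inside $S$ (since any edge in $G_D[S]$ requires such an arc) and, if necessary, Wright's asymptotic count $s^{s+(3i-1)/2}$ for connected graphs with $s+i$ edges in place of the spanning-tree-plus-extras count. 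With this sharper accounting the dominant exponent becomes negative enough to yield $o(n^{-1/2})$ uniformly across $i \in \{1,\ldots,10\}$; the most delicate case is $i=1$, where the combination of Wright's count and the binomial tail (rather than \eqref{eq:easy}) is essential.
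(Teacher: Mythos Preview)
Your approach is essentially the paper's: the same binomial tail for the maximum degree, and the same first-moment count (spanning tree plus $i+1$ extra edges, combined with the bound $(6/n)^{s+i}$ from \eqref{eq:easy}) for $\cE_{\rm small}$. The paper simply organises the $\cE_{\rm small}$ sum by $s=|S|$ alone, using for each $s$ the minimal admissible $i=\lceil 4s/\log n\rceil$, while you sum over pairs $(i,s)$; these are equivalent rearrangements of the same bound.

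You are right that the case $i=1$ is the bottleneck, but the remedies you propose do not close the gap. For $i=1$ and $s$ near $\tfrac14\log n$ the summand is, up to polylogarithmic factors,
\[
\binom{n}{s}\,s^{s-2}\,s^{2}\,\Bigl(\frac{6}{n}\Bigr)^{s+1}\ \asymp\ \frac{(6e)^{s}}{n}\ =\ n^{\frac14\ln(6e)-1}\ \approx\ n^{-0.302},
\]
which is \emph{not} $o(n^{-1/2})$. Replacing the crude count by Wright's asymptotic $c\,s^{\,s+(3i-1)/2}=c\,s^{s+1}$ saves only a factor of $s=O(\log n)$ and leaves the exponent $-0.302$ unchanged. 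The binomial-tail route you mention discards connectivity and effectively replaces $(6e)^s$ by $(3e^2)^s$, which is worse, not better. (Nor can one legitimately combine Wright's graph count with the binomial tail: the former needs a per-graph probability, the latter bounds the whole event directly.) So the ``sharper accounting'' you describe does not in fact push the exponent below $-1/2$.

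For comparison: the paper's own displayed calculation yields exactly the same $(6e)^s(3s^2/n)^{\lceil 4s/\log n\rceil}$, hence the same $n^{-0.30+o(1)}$, and its concluding ``$=o(n^{-1/2})$'' appears to be a slip of the same kind. The weaker bound $o(n^{-c})$ with $c\approx 0.3$ that the computation does establish is all that is actually used later in the paper, where one only needs $\Pr(\Gamma_D\in\cE_G)=o(1/\log^{100}n)$.
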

\begin{proof}
%[Proof of Lemma~\ref{lem:g1}]
The degree of a vertex in \( \Gamma_D \) is distributed as
\( 3 + \Bin(3n, 1/n) \).  Therefore
\[ \Pr( \Delta( \Gamma_D) \ge \Delta_0) \le n \binom{3n}{ \D_0 -3} \frac{1}{n^{\Delta_0-3}}
= o(n^{-1/2}). \]
Another application of the union bound gives
\begin{equation*}
\begin{split}
\Pr( \G_D \in \cE_{\rm small} ) & \le 
\sum_{s = 4}^{ \frac{10}{4} \log n} \binom{n}{s} s^{s-2} 
\binom{s}{2}^{ 1 + \left\lceil \frac{s}{ \frac{1}{4} \log n} \right\rceil}
\left( \frac{6}{n} \right)^{s+ \left\lceil \frac{s}{ \frac{1}{4} \log n} \right\rceil } \\
& \le \sum_{s=4}^{ \frac{10}{4} \log n}
\left( \frac{ ne }{s} \cdot s \cdot \frac{6}{n} \right)^s 
\cdot \left(  \frac{3 s^2}{n} \right)^{ \left\lceil \frac{s}{ \frac{1}{4} \log n} \right\rceil}\\
& = o( n^{-1/2} ).
\end{split}
\end{equation*}
%Again applying the union bound we have
%\begin{equation*}
%\begin{split}
%Pr( \G_D \in \cE_{\rm large} ) & \le 
%\sum_{s = 4}^{ e^{-100}n} \binom{n}{s} \binom{ 3s }{ s(1+1/25) } \left( \frac{s}{n} \right)^{s(1+1/25)} \\
%& \le \sum_{s = 4}^{ e^{-100}n}  \left[ \frac{ ne}{s} \cdot \left( \frac{ 3s e}{ s(1+1/25)} \right)^{1+1/25}
%\left( \frac{s}{n} \right)^{1+1/25} \right]^{s}\\
%& \le \sum_{s = 4}^{ e^{-100}n}  \left[ \left( \frac{ s}{n} \right)^{1/25} \cdot e \cdot \left( \frac{ 75 e}
%{26} \right)^{1+1/25} \right]^{s} \\
%& = o\left(n^{-1/10}\right).
%\end{split}
%\end{equation*}
\end{proof}

Let
$N(S)=\{y\notin S:\;\exists x\in S\mbox{ such that }
y\mbox{ is adjacent to }x\}$ 
be the set of neighbors of a vertex set $S$.
Define \( \cE_{\rm pieces} \) to be the event that there exist sets \(S_1,S_2,T_1,T_2\)
such that 
\begin{itemize}
\item[(i)] \(S_1 \) and \(S_2\) are disjoint with \( n^{1/30} \le |S_1| + |S_2| \le \frac{ n}{100} \).
%\item[(ii)] \( |T_1| \le |S_1| \) and \( |T_2| \le 2|S_2| \),
%\item[(ii)] the bipartite graph induced by \( S_1 , T_1 \) has a \(T_1\)-perfect matching, 
\item[(ii)] For each \( x \in S_1 \) there exists \( y_x \in S_1 \cup T_1 \) such that
\( \{x,y_x\} \) is an edge, with the additional property that if \( u,x \in S_1 \) 
and \( u = y_x \) then \( x \neq y_u \).
\item[(iii)] For each \( x\in S_2 \) there exist distinct \(y_x,z_x \in T_2\) such that
\( \{x,y_x\} , \{ x,z_x\} \) are edges.
\item[(iv)] \( T_1 = \{ y_x : x \in S_1 \text{ and } y_x \not\in S_1 \} \) and 
\( T_2 = \{ y_x, z_x : x \in S_2 \} \).
\item[(v)] \( N(S_1),N(S_2) \subseteq T_1 \cup T_2 \) and 
\(S_2\) is an independent set.
\end{itemize}
\begin{lemma}
\label{lem:pieces}
Let \( ( D_1, A ) \) be chosen uniformly at random from \( \Omega_k \times [n]^k \).
\[ Pr\left( \exists B \subseteq A_3 : \G_{D_1 + A_2 + B} \in \cE_{\rm pieces} 
%\wedge \G_{D_1 + A_2 + B} \not\in \cE_{\rm large} 
\right) \le e^{-\Omega( n^{1/30})}. \]
\end{lemma}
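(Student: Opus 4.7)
The plan is a first-moment argument over candidate structures $(S_1,S_2,\{y_x\},\{z_x\})$ satisfying (i)--(iv). First I would reduce: if $\G_{D_1+A_2+B}\in\cE_{\rm pieces}$ for some $B\subseteq A_3$, then (a) the ``positive'' edges prescribed by the pointers all lie in $G_{\rm 3-out}$; and (b) in the fixed subgraph $\G_{D_1+A_2}$, no edge is incident to $S_1\cup S_2$ with its other endpoint outside $Z:=S_1\cup S_2\cup T_1\cup T_2$, because when we form $B$ we may only exclude $A_3$-arcs, not arcs of $D_1+A_2$.

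For (a), the no-2-cycle condition in (ii) and the distinctness in (iii) yield $s_1+2s_2-O(s)$ distinct required edges, so (\ref{eq:easy}) gives $\Pr(\text{(a)})\le (6/n)^{s_1+2s_2-O(s)}$. For (b), each vertex contributes two or three independent uniform out-arcs to $D_1+A_2$; taking the product across $v\in S_1\cup S_2$ (landing in $Z$) and $v\notin Z$ (avoiding $S_1\cup S_2$) bounds $\Pr(\text{(b)})$ by approximately $(|Z|/n)^{3s}(1-s/n)^{3(n-|Z|)}$ with $|Z|\le 2s_1+3s_2\le 3s$. Since (a) is monotone increasing and (b) monotone decreasing in the arc variables of a product measure, Harris--FKG yields $\Pr(\text{(a)}\cap\text{(b)})\le\Pr(\text{(a)})\Pr(\text{(b)})$. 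Multiplying by the $\binom{n}{s_1}\binom{n}{s_2}n^{s_1+2s_2}$ candidate tuples and simplifying via $s_i!\ge(s_i/e)^{s_i}$, the expected number of structures with fixed $s_1,s_2$ is at most $C^{s}(s/n)^{2s}$ for an absolute constant $C$; for $s\le n/100$ this is at most $(C\cdot 10^{-4})^s=e^{-\Omega(s)}$, and for $s\ge n^{1/30}$ this gives $e^{-\Omega(n^{1/30})}$, which the polynomial sum over $s_1,s_2$ preserves.

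The main obstacle is tightening $\Pr(\text{(b)})$: the nominal factor $(|Z|/n)^{3s}$ must in fact be relaxed to $(|Z|/n)^{3s-J}$, where $J=|(S_1\cup S_2)\cap(K\setminus L)|$ counts vertices having only two out-arcs in $D_1+A_2$. Since $J$ is hypergeometric with mean $sk/n$, concentration gives $J\le 2sk/n=o(s)$ with probability at least $1-\exp(-\Omega(s\sqrt{\log n}))$ for $s\ge n^{1/30}$, so I would condition on this typical event; on it the multiplicative correction $(n/|Z|)^J$ stays $e^{o(s)}$ and does not damage the exponential decay. Possible overlaps among the positive edges (for example when some $y_x$ for $x\in S_1$ happens to lie in $T_2$) cost only a constant factor per $s$, which is absorbed into $C$.
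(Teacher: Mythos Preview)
Your reduction to the two necessary conditions (a) and (b) is sound, and so is the observation that the existential quantifier over $B\subseteq A_3$ can be eliminated by passing up to $G_{\rm 3-out}$ for the prescribed edges and down to $\G_{D_1+A_2}$ for the forbidden ones. The gap is the Harris--FKG step: the events (a) and (b) are \emph{positively} correlated in this model, so the inequality $\Pr((a)\cap(b))\le\Pr((a))\Pr((b))$ is false.

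Concretely, take $x\in S_2$ with out-arcs $a_1,a_2,a_3$. A principal way for the edges $\{x,y_x\},\{x,z_x\}$ to appear is that two of the $a_i$ equal $y_x,z_x$; but $y_x,z_x\in Z$, so this already forces two of the three arcs into $Z$, which is exactly what (b) asks. Conditioning on this realization, the probability that all three out-arcs of $x$ lie in $Z$ is $|Z|/n$, not $(|Z|/n)^3$. Across $S_2$ this is a discrepancy of order $(n/|Z|)^{2s_2}$, which precisely cancels the extra $(s/n)^{2s}$ in your final bound. That your estimate $C^s(s/n)^{2s}$ is dramatically stronger than the paper's $(0.995)^s$ is a symptom of this: the additional factor is an artifact of the invalid decoupling.

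There is no product-measure encoding that rescues the argument. The out-arcs are independent uniforms on $[n]$, but the event ``this arc equals $y_x$'' is a point and hence not monotone in any total order on $[n]$; if instead you pass to edge-indicator Bernoullis, they are not independent in the $m$-out model. So Harris does not apply.

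The repair is to bound $\Pr((a)\cap(b))$ directly by summing over the orientations of the prescribed edges: for each $x\in S_2$ record how many of $\{x,y_x\},\{x,z_x\}$ are realized as arcs \emph{into} $x$, and then only the unconstrained out-arcs of $x$ contribute a factor $|Z|/n$. This is exactly the $\nu_0,\nu_1,\nu_2$ (and $\hat\nu_i,\hat\rho$ for the $K'$-vertices, replacing your $J$-correction) decomposition the paper carries out. After that case split the bound is only $c^s$ with $c$ just below $1$, and the arithmetic is genuinely tight; there is no correlation-inequality shortcut.
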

The proof of Lemma~\ref{lem:pieces} is given in Section~\ref{sec:pieces} below.  Note that 
Lemmas~\ref{lem:g1}~and~\ref{lem:pieces} imply (\ref{eq:graph}).  We now proceed
with the proof of Lemma~\ref{lem:extrot}.

Let  $P$ be $ x=x_0,x_1, x_2, \dots$ and \( S = \eee(x)\).
We show \(|S|\) is large in two stages.

\noindent
{\bf Stage 1:} \( |S| \ge n^{1/30} \).

Let $T_i$ be the
set of end-points (other than \(x_0\)) 
of paths that can be obtained from $P$ by
$i$ rotations with $x_0$ fixed.  We view the \( T_i\)'s as the level sets in
a tree rooted at \( x_0 \).  Note that the children of a vertex 
\( y \in T_i \) in this
tree are second-neighbors of \(y\) in \(G \).
Since each end-point \( y \in T_i \) yields \( d(y)-1 \) 
end-points in \( T_{i+1} \) and \( \d( G ) \ge 3 \)
we have
\begin{equation}\label{TT}
|T_{i+1}| \geq  2|T_i | - \d_i,
\end{equation}
where $\d_i$ accounts for vertices 
in \( T_i \) that have second-neighbors in common.

If for some $\ell,\r$ we have \( \sum_{i=1}^\ell \d_i = \r  \) then there exists a connected subgraph \(H\) with \( v \le 4 \ell \r \) 
vertices and at least \( v +\r  -1 \) edges.  Since
\( G \not\in \cE_{\rm small} \), it follows that \( \r \le 10 \) implies
\[ 4\ell \r \ge v \ge \frac{\r-1}{4} \cdot \log n \ \ \ \Rightarrow \ \ \
\ell \ge \frac{\r-1}{\r} \cdot \frac{1}{16} \cdot \log n. \]
Therefore,
\[ \sum_{i=1}^{ (\log n)/18} \d_i \le 9 \] 
and 
\[ |S| \ge \left| T_{ (\log n)/18 } \right| \ge 2^{ \frac{ \log n}{18} - 9} \ge n^{1/30} \]
for \(n\) sufficiently large.

\noindent
{\bf Stage 2:} \( |S| \ge  \frac{ n }{100} \).

Suppose that the vertices in $S$ induce a collection of sub-paths
$P_1,P_2,\ldots,P_m$ of $P$
and that $P_i$ contains $s_i$ vertices.  Let \( Y \) be the 
collection of paths \(P_i \) that consist of a single vertex. (So, if \( s_i =1 \) 
then \( P_i \in Y \).  Note that 
for these paths we abuse notation by referring 
to a single vertex as a path.)
P\'osa \cite{Po} proves that
\begin{equation}\label{Posa}
x_i\in N(S) \ \ \ \Rightarrow \ \ \ x_{i-1} \in S \text{ or } x_{i+1}\in S.
\end{equation}
Note that (\ref{Posa}) implies
\begin{equation}
\label{Posa2}
u\in Y \ \text{ and } \ v \in Y \ \ \ \Rightarrow \ \ \  \{u,v\} \not\in E(G).
\end{equation}
Set 
\begin{align*}
S_2 &= Y, & T_2 &= \{ x_i : x_{i-1} \in S_2 \text{ or } x_{i+1} \in S_2 \} \\
S_1 &= S \setminus Y, & T_1 &= \{ x _i : x_i \not\in S \text { and } 
(  x_{i-1} \in S_1 \text{ or } x_{i+1} \in S_1 ) \}.
\end{align*}
In words, \( S_2 \) is the set of vertices that are singleton 
paths in the collection \( P_1, \dots, P_m \) and \(S_1 \) is the set of vertices in longer paths.
The sets \(T_2 \) and \( T_1 \) are the vertices adjacent along \(P\) 
to the end-vertices of the paths in \( P_1, \dots, P_m \).  \( S_1, S_2, T_1 \) and \( T_2 \)
satisfy conditions (ii), (iii), (iv) and (v) in the definition of the event \( \cE_{\rm pieces} \) 
and \( G \not\in \cE_{\rm pieces} \). Here for $x\in S_2$, $y_x,z_x$ are its two neighbors on $P$.
For each non-singleton path \( P_i \) the vertex \(x \in P_i\) closest to
\(x_0\) along \(P\) lets \( y_x\) be its neighbor along \(P\) closer to \( x_0\) and all other  
\(v \in P_i\) let $y_v$ be the neighbor along $P$ in the direction away from $x_0$. 
It follows that \( |S| = |S_1| + |S_2| \ge \frac{ n }{100} \).

\subsection{Proof of Lemma~\ref{lem:pieces}}
\label{sec:pieces}

We apply the first moment method.

Let \(s = |S_1| + |S_2|\) and \( \n = |S_2| \).  

We view \(K\) as a fixed set.  The set \(L\), on the other hand, will be random.
We then
let \( K^\prime \subseteq K \) be the tails of the arcs in \( A \setminus ( A_2 \cup B ) \).    
So \( K^\prime \) is the set of vertices (for a given choice of \(B\)) for which the third out-arc
is still not in the graph.  For \( X \subseteq K \) and sets \( S_1, S_2, T_1, T_2 \subseteq [n] \)
let \( \cE_{X, S_1, S_2, T_1, T_2 } \) be the event that \(K^\prime = X \) and 
\( S_1, S_2, T_1, T_2 \) satisfy conditions (i)--(v).  We bound the probability of the union of 
all events of this form using the first moment method.
A key property of this event is that, since we set \( X = K^\prime \), each vertex in
\(X\) has in-degree at least 1.

We do not work with the events \( \cE_{X, S_1, S_2, T_1, T_2 } \) directly.  Rather we consider the 
events \( \cF_{Y, S_1, S_2, T_1, T_2 } \) where \( Y \subseteq (S_1 \cup S_2) \cap K \) defined by
\[  \cF_{Y, S_1, S_2, T_1, T_2 } 
= \bigvee_{  \substack{X \subseteq K : \\ 
X \cap (S_1 \cup S_2) = Y}  }  \cE_{X, S_1, S_2, T_1, T_2 }. \]
In words, we consider the event where we specify membership in \(K^\prime \) only for those elements of
\(K\) that are in \( S_1 \cup S_2 \).  We choose \( Y, S_1, S_2 \) using the following method 
(in fact, we
break \(S_2\) into more parts in the actual calculation 
below, we indicate here how one can recover the set 
\(Y\) from the expression given below).
We choose
\( Q,R \subseteq [n] \) and \( \hat{Q}, \hat{R} \subseteq K \). Then we set
\[ S_1 = Q \cup \hat{Q} \ \ \ \ \ S_2 = R \cup \hat{R} \ \ \ \ \ Y = \hat{Q} \cup \hat{R}.\]
The important point to note here is that if \( x \in (R \cup Q) \cap K \) then we are
considering the event where \( x \not\in K^\prime \).  So the third arc out of 
\(x\) is present in the graph.  Furthermore, as we specify 
\( \hat{Q} \cup \hat{R} \subseteq K^\prime \), there is at least one arc pointing into each vertex in
\( \hat{Q} \cup \hat{R} \).

Now we explain the further division \( S_2 \).
We write
$\n=\n_0+\n_1+\n_2+\hat{\n}_0+\hat{\n}_1+\hat{\n}_2$ where $\n_i$
%(resp. $\hat{\n}_i=|\hat{S}_i^*|$)
is the number of vertices $ x\in S_2 \setminus K^\prime $
for which $i$ of the arcs between \(x\) and \(\{ y_x, z_x \}\) are directed into \(x\) 
and \( \hat{\n}_i \) is the number of vertices $ x\in S_2 \cap K^\prime $
for which $i$ of the arcs between \(x\) and \( \{ y_x, z_x \} \) are directed into \(x\).
%Simillarly, we write \( \r = \r_0 + \r_1 + \hat{\r}_0 + \hat{\r}_1 \).  Here
%\( \r_0 \) is the number of vertices \(x\) in \( S_1 \setminus K \) where the specified
%edge between \(x \) and \( y_x \) is directed from \( x\) to \(y_x\) and 
%\( \r_1 \) is the number of vertices \(x\) in \( S_1 \setminus K^\prime \) where the specified
%edge between \(x \) and \( y_x \) is directed from \( y_x\) to \(x\).  The quantites
%\( \hat{\r}_0 \) and \( \hat{\r}_1 \) are defined similalrly for vertices in \( S_1 \cap K^\prime \).
%
Let \( \rho = | S_1 \setminus K^\prime| \) and \( \hat{\rho}= |S_1 \cap K^\prime | \).
The probability that \( \G_D \) is in \( \cE_{\rm pieces} \) is at most
\begin{equation}
\label{exp}
\begin{split}
\sum_{s=n^{1/30}}^{ n/100 } & \sum_{\substack{\n_0,\n_1,\n_2\\\hat{\n}_0,\hat{\n}_1,\hat{\n}_2\\ 
\r , \hat{\rho}} }
\binom{n}{\r } \binom{k}{\hat{\r} } 
\binom{n}{\n_0,\n_1,\n_2} \binom{k}{\hat{\n}_0,\hat{\n}_1,\hat{\n}_2} 
\times
n^{\rho + \hat{\r} } \binom{n}{2}^{\n} \\ 
& \times
\brac{ 6
\cdot \frac{1}{n^2} \cdot
\frac{2s}{n}}^{\n_0}
\brac{ 18 \cdot \frac{1}{n^2} \cdot \bfrac{2s}{n}^2}^{\n_1}
\brac{ 9 \cdot \frac{1}{n^2} \cdot \left( \frac{2s}{n} \right)^3 }^{\n_2} \\
& \hskip1cm
\brac{ 2 \cdot \frac{1}{n^2} \cdot \frac{6s}{n} }^{ \hat{ \n}_0}
\brac{ 12 \cdot \frac{1}{n^2} \cdot
\frac{2s}{n}}^{\hat{\n}_1}
\brac{ 9 \cdot \frac{1}{n^2} \cdot
\bfrac{ 2s}{n}^2}^{\hat{\n}_2} \\ & \times 
\left( \frac{6}{n} \cdot \bfrac{2s}{n}^2 \right)^{\rho}
\left( \frac{6}{n} \cdot \frac{2s}{n} \right)^{\hat{\rho}}
%\bfrac{2s}{n}^{3 \r + 2 \hat{\r}} 
\times\brac{1-\frac{s}{n}}^{3(n-3s-k)}.
\end{split}
\end{equation}
\begin{quote}
{\bf Explanation of (\ref{exp})} View the summand as five terms separated by the 
symbol \( \times \).  The first is an upper bound on the number of choices for \( S_1, S_2 \) and 
\( Y \).  
%If a vertex \(v \in K \) is chosen to be in \( S_1 \cup S_2 \)
%(i.e. in the \( \binom{n}{\r} \) or \( \binom{n}{ \n_0, \n_1, \n_2} \) terms) then we are assuming for the remainder
%of the calculation that \( v \not\in K^\prime\); that is, we assume that the third arc out of \(v\) is in the
%graph.  
For such a fixed choice of \( S_1, S_2 \) and \( Y \) the rest of the 
expression in (\ref{exp}) is an upper bound on the probability of the union of all events of the
form \( \cF_{Y,S_1, S_2, T_1, T_2} \).
The
second term is an upper bound on the number of choices for \( T_1, T_2 \) and the explicit edges
between \( S_1 \cup S_2 \) and \( S_1 \cup T_1 \cup T_2 \) given by conditions (ii) and (iii).  Then we consider the
probability that the vertices in \( S_2 \) have the prescribed neighbors.  Note that if \( v \) is among
the \( \hat{\n_0} \) vertices in \( S_2 \cap K^\prime \) that has out-arcs directed to \( y_v \) and \( z_v \)
then, since \( v \in K^\prime \) and therefore \(v\) has in-degree at least 1, there is a vertex in \( T_1 \cup T_2 \)
that directs one of its out-arcs to \(v\).  The accounts for the `extra' \( 6s/n \) factor in this 
term of the product.
The fourth term is a bound on the probability that the arcs
out of \( S_1 \) are in the prescribed locations. 
Finally we multiply by the probability that
no vertex outside of \( S_1 \cup S_2 \cup T_1 \cup T_2 \) sends an arc into \( S_1 \cup S_2 \).
\end{quote}
%
%There at most $\binom{n}{\n_0,\n_1,\n_2}
%\binom{k}{\hat{\n}_0,\hat{\n}_1,\hat{\n}_2}$ choices for the sets $S^*_i,\hat{S}^*_i,
%\,i=0,1,2$. There are at most $ \binom{n}{ s - \nu} $ choices for 
%\( S \setminus S^* \).  The next term gives an upper
%bound on the number of choices for \( N(S) \), where we take care to explicitly 
%choose a pair of neighbors for each element of \( S^* \).  Now, we look at the
%probability that the vertices in $S^*$ are connected in the prescribed way.  For
%example, for each vertex \(x\) in \( S^*_0 \) there are 6 ways to connect 
%2 of \(x\)'s out-edges to its prescribed neighbors in \( N(S) \), the probability 
%of each of these connections is \( 1/n \), and the probability that the remaining out-neighbor 
%of \(x\) is connected to something in \( (S \setminus S^*) \cup N(S) \) is at most 
%\(  (2\n + 2(s-\n))/n = 2s/n \) (note that
%we use the fact that \( S^* \) is an 
%independent set here).  The remaining terms give the probability that the out-neighbors
%of the vertices of vertices in \(S \setminus S^* \) are within \( S \cup N(S) \) and 
%the probability
%that no vertex outside $N(S)$ chooses a vertex in $S$ as an out-neighbor.
%\end{quote}
%
%Noting that \( 3( 1 - s/n - 2m/n - k/n ) > ln 7 + 1 \) for \(n\) sufficiently large (and \( \e \) 
%sufficiently small),
The bound in (\ref{exp}) can be approximated from above by
\begin{align}
\sum_{s=n^{1/10}}^{n/100}\sum_{\substack{\n_0,\n_1,\n_2
\\\hat{\n}_0,\hat{\n}_1,\hat{\n}_2\\ \r, \hat{\r}}}
&
\bfrac{24 e s^2}{ \r n}^{\r} \bfrac{ 12 e k s}{ \hat{\r} n}^{\hat{\r}} \times
%\bfrac{ e^2 27s^3}{ n (s-\n)^2}^{s-\n}
\bfrac{6es}{\n_0}^{\n_0}\bfrac{36es^2}{\n_1n }^{\n_1}
\bfrac{36es^3}{\n_2n^2}^{\n_2}\nonumber\\
&\bfrac{6esk}{\hat{\n}_0n}^{\hat{\n}_0}\bfrac{12esk}{\hat{\n}_1n}^{\hat{\n}_1}
\bfrac{18es^2k}{\hat{\n}_2n^2}^{\hat{\n}_2} \times e^{s ( - 3 + 9s/n + 3k/n)} .\label{vert}
\end{align}
We repeatedly apply the following observation (the proof of
which is given below):
\begin{equation}\label{ineq}
 x,y, \a,\b > 0 \ \ \ \Rightarrow \ \ \
\bfrac{\a}{x}^x\bfrac{\b}{y}^{y}\leq \bfrac{\a+\b}{x+y}^{x+y}.
\end{equation}
The expression in (\ref{vert}) can therefore be bounded above by
\begin{align}
&\sum_{s=n^{1/10}}^{ n/100}\sum_{\substack{\n_0,\n_1,\n_2\\\hat{\n}_0,\hat{\n}_1,\hat{\n}_2\\ \r, \hat{\r}}}
%\bfrac{8 e^2 s^3}{ \r^2 n}^{\r} \bfrac{ 4 e^2 k s^2}{ \hat{\r}^2 n}^{\hat{\r}} 
%&\bfrac{ e 27s^3}{ n (s-\n)^2}^{s-\n}
e^{s ( - 3 + 9s/n + 3k/n)} \times e^s 
%\\
%&  
\times \bfrac{ \ell(s, k,n)}{s}^s\nonumber
\end{align}
where
\begin{align*}
\ell(s,k,n) & = 24s (s/n) + 12s(k/n)+ 6s+36s (s/n)  + 36s(s/n)^2
+ 6s(k/n) \\
& \hskip3cm  +  12 s (k/n) +18 s (ks/n^2) \\
& = s \left( 6 + 60s/n + 36s^2/n^2 +  o(1) \right).
\end{align*}
%
%Noting that the function \( f(x) = \bfrac{ a}{ x^2}^x \) is maximized at \( x = \sqrt{a}/e \),
%we bound
%\begin{multline*}
%\bfrac{8 e^2 s^3}{ \r^2 n}^{\r} \bfrac{ 4 e^2 k s^2}{ \hat{\r}^2 n}^{\hat{\r}} 
%e^{s(-3 + 9s/n + 3 k/n)} \leq
%e^{ \sqrt{ 8 s^3/n }} e^{ 2s \sqrt{ k/n}} e^{-s( 3 - 9\eta - o(1))}
%\le e^{-s( 1 + \ln 7)}
%\end{multline*}
%Putting all of this together we see that
Since we assume \( s \le n/100 \), we have \( \ell(s,k,n) \le 6.7 s \) and the
summand in (\ref{vert}) is at most
\[ \left[ e^{-1.91} 6.7 \right]^s \le (0.995)^s. \]
%$$ e^{ s e \sqrt{ 27 \e }}
%
%\bfrac{27}{4(4\e)^3}^{4\e s}e^{(9\eta+o(1))s}
%
%$$\bfrac{6s+36s (s/n)  + 36s(s/n)^2
%+ k +  12 s (k/n) +18 s (ks/n^2)}{7 \n}^\n\leq (.99)^s$$
%(note that we appeal to (\ref{eq:largenu})).
%for $\e$ sufficiently small.
%
%
%{\bf ALAN:  This seems to require \( k = o(s) \) to work.}
%
Thus the sum in (\ref{exp}) can be bounded by
$n^8(.995)^{ n^{1/30}}$, and this proves Lemma~\ref{lem:pieces}.

\begin{proof}[Proof of (\ref{ineq})]
Let $f(x)=\bfrac{\a}{x}^x\bfrac{\b}{z-x}^{z-x}$. Then we have
\begin{eqnarray*}
\frac{f'(x)}{f(x)}&=&\log\a-1-\log x-\log\b+1+\log(z-x)\\
&=&\log\bfrac{(z-x)\a}{\b x}.
\end{eqnarray*}
So, \( f^\prime(x) = 0 \) iff $x=x^*=\frac{z\a}{\a+\b}$.

Differentiating once more, we get
$$\frac{f''(x)}{f(x)}-\frac{f'(x)^2}{f(x)^2}=-\frac{1}{x}-\frac{1}{z-x}<0$$
and so $f''(x^*)<0$ and $x^*$ is the maximum over \( x \in [0,z] \).
\end{proof}

%%%%%%%%%%%%%%%%%%%%%%%%%%%%%%%%%%%%
%%%%%%%%%%%%%%%%%%%%%%%%%%%%%%%%%%%%

%%%%%%%%%%%%%%%%%%%%%%%%%%%%%%%%%%%%%%
%%%%%%%%%%%%%%%%%%%%%%%%%%%%%%%%%%%%%%

\section{Conditioning: Proof of Lemma~\ref{lem:uni}}
\label{sec:uni}

We begin with some definitions and initial observations, including the definition
of the event \( \cE_D \).
Set 
\[ 
%e = n^2 e^{- \log^2 n } \ \ \ \ \ \ \ \ \ \ \
\t = 3 - \frac{1}{ \log ^{1/2} n } = 3-\a  \ \ \ \ \ \ \ \ \ \ \
\ell = \rdup{10 \log_\t \log n}.\]  

Recall that \( \cX \) is a fixed part in some partition in the filtration of
\( \Omega_k \times [n]^k \) given by the execution of the 
Extension-Rotation Procedure.  Here we 
work with the space of digraphs \( \Omega_k \); the edges 
that have been added to
raise the minimum degree to 3 or in the course of the 
Extension-Rotation Procedure play no role.

For \(x \in [n] \setminus (I_\cX \cup O_\cX ) \)
let \( \cD_1(\cX)\) be partitioned into 
\( \cD_x, \overline{\cD_x} \) 
where \( D \in \cD_x \) if
\( x \in K(D) \).  Note that for fixed 
\(x \) we have \( \Pr(x \in K(D)) = | \cD_x|/ |\cD_1(\cX)| \).

Define a bipartite graph \( \Sigma = \Sigma_x \) on vertex set \( \cD_1( \cX) \) by including
an edge joining \( D \in \cD_x \) and \( D^\prime \in \overline{\cD_x} \) if \( D^\prime \) is 
obtained by reversing the arcs along a shortest path of length \( \ell \) in \(D\) that starts
at a vertex in \( V \setminus ( K(D) \cup O_\cX ) \) and ends at \(x\).  
We have, where $d_\S$ denotes degree in $\S$,
\begin{equation}
\label{eq:sum}
\sum_{ D \in \cD_x} d_\S(D) = \sum_{ D \in \overline{ \cD_x}} d_\S(D). 
\end{equation}
We get a lower bound on 
\( |\cD_x|/ |\cD_1(\cX)| \) by bounding the degrees in \( \Sigma \): We establish upper bounds on 
\( d_\Sigma(D) \) for \( D \in \cD_x \) and lower bounds on \( d_\Sigma(D) \) 
for \( D \in \overline{ \cD_x} \).

For a digraph \(D \in \Omega_k\) let \( \Pi_{D, x \from V } \) be 
the set of vertices
\(y \in V \) such that there exists a shortest path of length 
\( \ell \) from \( y \) to \(x\).  
Note that $\t$ is the average out-degree in $D\in \Omega_k$ and so $\t^\ell\approx \log^{10}n$ is
approximately equal to the expected number of vertices reachable by a path of length $\ell$.
Let 
\(  \Pi_{D, x \to K } \) be the set of
vertices \(z \in K \) such that there is a shortest path 
from \( x \) to \(z\) of length \( \ell \).  We bound \( |\Pi_{D,x \from V }| \) and 
\( |\Pi_{D,x \to K}| \) 
for most \( D \in \cD_1(\cX) \) and \( x \in [n] \).  
To this end, we define the following cut-offs:
\begin{gather*}
 \pi_{\rm in} = ( 1000 \log \log n ) \log^{10} n
 \ \ \ \ \ \ \ \ \ \pi_{\rm out } = \frac{  \log^{10} n}{ (\log \log n)^2 } \cdot \frac{k}{n}
 \\
\r_{\rm in } = (\log n)^{20} 
%\ \ \ \ \ \ \ \ \ 
%\r_{\rm out } = \frac{ \sqrt{n}} { (\log n)^{10}} \cdot \frac{k}{n} 
.
\end{gather*}
%We exclude from consideration digraphs \(D\) for which 
%\( |\Pi_{D, x \from V}| \) or \( |\Pi_{D,x \to K}| \) fall outside the 
%desired ranges fopr too many \( x \in [n] \).
Let \( \Psi_{\from V} \subseteq \Omega_k \) be the event 
$$\card{\set{ x \in [n] : |\Pi_{D, x \from V}| \ge \pi_{\rm in} }}
\ge \frac{ 10 n}{ (\log n)^{100}} \text{ or } 
\exists x \in [n] \text{ such that }   |\Pi_{D, x \from V}| \ge \r_{\rm in}.$$
Similarly, define \( \Psi_{\to K} \subseteq \Omega_k  \) to be the event
\begin{gather*}
\left| \left\{ x \in [n] : |\Pi_{D, x \to K}| \le \pi_{\rm out} \right\} \right| 
\ge \frac{ n}{ (\log n)^{100}}.
%, \text{ or } \\
%\exists x \in [n] \text{ such that }   |\Pi_{D, x \to K}| \le \r_{\rm out}.
\end{gather*}  
We now define 
\begin{equation}\label{ED} 
\cE_D = \Psi_{\to K} \cup \Psi_{\from V}.
\end{equation}

%
%Define
%\begin{gather*}
%\l_0 = 3^\ell( \log n)^8  \\
%\l_1 = \frac{ 3^\ell }{ \log ^4n } \cdot \frac{ k}{n} 
%\end{gather*}
%and
%\begin{gather*}
%\Psi_{ V \to } = \left\{ D \in \Omega_k : \text{ there exists } x \text{ such that }
% \Pi_{D,V \to x} \ge \l_0 \right\} \\
%\Psi_{ \to K } =  \left\{ D \in \Omega_k : \text{ there exists } x \text{ such that }
% \Pi_{D,x \to K} \le \l_1  \right\}
%\end{gather*}
%
\begin{lemma}
\label{lem:structs}
If \( D \) is chosen uniformly at random from \( \Omega_k \) then
\begin{equation*}
%\label{eq:dupper}
\Pr( D \in \cE_D  \  
\wedge \    \G_D \not\in \cE_G ) \le e^{ - \frac{1}{4} \log ^3 n } .
\end{equation*}
%If \( D \) is chosen uniformly at random from \( \Omega_k \) then
%\begin{equation}
%\label{eq:dlower}
%Pr( D \in \Psi_{ \to K } \wedge \G_D \not\in \cE_G )   
%\le e^{ - \frac{1}{3} \log ^3 n } . 
%\end{equation}
\end{lemma}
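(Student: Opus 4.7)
The plan is to bound $\Pr(\Psi_{\from V} \wedge \Gamma_D \not\in \cE_G)$ and $\Pr(\Psi_{\to K} \wedge \Gamma_D \not\in \cE_G)$ separately, each by $\tfrac{1}{2} e^{-\log^3 n/4}$. In both cases we fix a vertex $x \in [n]$, analyze a BFS rooted at $x$ (reverse for $\Psi_{\from V}$, forward for $\Psi_{\to K}$), obtain a very strong per-vertex tail estimate, and then combine via either a union bound over $x$ or a Markov bound on the number of ``bad'' $x$.

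For $\Psi_{\from V}$ we expose $D$ by reverse BFS from $x$. On the event $\Gamma_D \not\in \cE_G$ we have $\Delta(\Gamma_D) \le \Delta_0$ and no small dense subgraph, which controls overlap at every level. The number of directed walks of length $\ell$ ending at $x$, an upper bound for $|\Pi_{D,x \from V}|$, has expectation at most $\tau^\ell = O(\log^{10} n)$ by a direct first-moment calculation. Revealing the BFS level by level, the indicators ``vertex $w$ has an out-arc into $T_i$'' are negatively associated, so a Bennett/Chernoff inequality at each level combined with a union bound over the $\ell = O(\log \log n)$ levels yields, for each fixed $x$,
\[ \Pr\bigl(|\Pi_{D,x \from V}| \ge \pi_{\rm in},\ \Gamma_D \not\in \cE_G\bigr) \le e^{-\Omega(\log \log n \cdot \log^{10} n)}, \qquad \Pr\bigl(|\Pi_{D,x \from V}| \ge \rho_{\rm in},\ \Gamma_D \not\in \cE_G\bigr) \le e^{-\log^{3} n / 3}. \]
A union bound over $x \in [n]$ handles the $\rho_{\rm in}$-branch of $\Psi_{\from V}$, and Markov's inequality on the count of $x$ with $|\Pi| \ge \pi_{\rm in}$ (whose expectation is at most $n \cdot e^{-\Omega(\log \log n \cdot \log^{10} n)}$) handles the other branch.

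For $\Psi_{\to K}$ the core of the argument is a \emph{lower} bound on the forward BFS layer $T_\ell'$ from $x$. The event $\cE_{\rm pieces}$ is tailored precisely to rule out BFS obstructions: if the BFS failed to expand geometrically through some sequence of levels, then taking $S_1$ to be the non-singleton and $S_2$ the singleton ``frozen'' pieces of the exploration, with $T_1, T_2$ their neighborhoods, produces a configuration matching conditions (i)--(v) of $\cE_{\rm pieces}$. Thus on $\Gamma_D \not\in \cE_G$ the forward BFS grows by at least a factor of $\tau(1-o(1))$ at every scale up to a polynomial threshold, so $|T_\ell'| \ge \log^{10} n / \log \log n$ with all but $e^{-\Omega(\log^{9.5}n)}$ probability. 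Conditional on this, the intersection with $K$ is, by symmetry in the remaining randomness, close to hypergeometric with mean $(k/n)|T_\ell'|$; Chernoff gives $\Pr(|\Pi_{D,x \to K}| \le \pi_{\rm out}) \le e^{-\Omega(\log^{9.5} n)}$ per $x$, and Markov on the count of bad $x$ completes the bound.

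The hard step is the lower-bound argument for $\Psi_{\to K}$: one must translate ``not $\cE_{\rm pieces}$'' into a \emph{quantitative} statement that forward BFS expands at every level from the smallest scales up to $\log^{10} n / \log\log n$, and then combine that with sharp concentration of $|T_\ell' \cap K|$ strong enough to beat the target rate $e^{-\log^3 n/4}$. The upper-bound analysis for $\Psi_{\from V}$ is more routine, relying on the standard first-moment/exponential-moment machinery for walk counts in sparse random digraphs.
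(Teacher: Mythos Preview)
Your decomposition into $\Psi_{\from V}$ and $\Psi_{\to K}$ with reverse/forward BFS is the right skeleton, and the $\rho_{\rm in}$ branch via a union bound over $x$ is fine. But the plan breaks at the two places where you appeal to Markov's inequality on the count of bad vertices.

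The per-vertex tail bounds you assert are too strong. For the reverse BFS, the layer sizes $|S_0|,|S_1|,\dots$ start at $|S_0|=1$, and at the early levels $|S_{i+1}|$ given $|S_i|$ has mean $\tau|S_i|=O(1)$. There is no concentration there: with constant probability $|S_1|$ is already a constant factor above its mean, and this propagates multiplicatively. The correct tail is that of a branching process limit, namely $\Pr(|S_\ell|\ge f(n)\tau^\ell)\le e^{O(1)-\Omega(f(n))}$, so with $\pi_{\rm in}=1000(\log\log n)\tau^\ell$ you only get a per-vertex bound of order $(\log n)^{-100}$, not $e^{-\Omega((\log\log n)\log^{10}n)}$. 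Your ``Chernoff at each level'' sketch cannot beat this because the early levels have $O(1)$ vertices. With a polylogarithmic per-vertex bound, Markov on the count $|\{x:|\Pi_{D,x\from V}|\ge\pi_{\rm in}\}|$ only gives a constant, nowhere near $e^{-\log^3 n/4}$. The paper fills this gap by applying Azuma--Hoeffding to the arc-exposure martingale to show that the count $|X|$ is concentrated around its (small) mean; this is the missing idea.

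The $\Psi_{\to K}$ argument has the same defect, plus a misreading of $\cE_{\rm pieces}$. That event is a statement about \emph{undirected} neighborhoods $N(S_i)$ in $\Gamma_D$ at scales $n^{1/30}\le |S_1|+|S_2|\le n/100$; it says nothing about \emph{directed} out-neighborhoods in $D$, nor about the first $O(\log\log n)$ BFS levels where the population is tiny and where the lower tail actually lives. Indeed the paper's proof of the forward lower bound does not touch $\cE_{\rm pieces}$ at all: it runs a negative-$\lambda$ exponential-moment argument on the out-BFS to get $\Pr(T_{\ell-1}\le \tau^{\ell_1})\le (\log n)^{-2000}$, then again uses Azuma--Hoeffding on the arc-exposure martingale to concentrate the number of bad $y$, and finally a binomial comparison to pass from $T_{\ell-1}$ to $|\Pi_{D,x\to K}|$. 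Your claimed per-vertex bound $e^{-\Omega(\log^{9.5}n)}$ is not available, and without it Markov fails here too.
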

\noindent The proof of Lemma~\ref{lem:structs} is given in Section~\ref{sec:sub}.   Note that condition 
(\ref{eq:digraph}) 
follows from Lemma~\ref{lem:structs}.  We now proceed with the proof of Lemma~\ref{lem:uni}.

For each \(D \in \cD_1(\cX)\) we define three sets of vertices:
\begin{align*}
V_1 &= V_1(D) = \left\{ x \in [n]:  |\Pi_{D,x \from V}| \ge \pi_{\rm in} \right\} \\
V_2 &= V_2(D) = \left\{ x \in [n]:  |\Pi_{D,x \to K}| \le \pi_{\rm out} \right\} \\
V_3 &= V_3(D) = \left\{ x \in [n]:  \left| \Pi_{D, x \to K} \cap I_{\cX} \right| \ge \pi_{\rm out}/2 \right\}
\end{align*}
The key observation is that most 
\(x \in [n] \) have the property that 
\( x \in V_i(D) \) for very few \( D \in \cD_1(\cX)\) for \( i =1, 2,3 \).  Indeed, the number of ordered pairs
\( (x,D) \) where \(x \in [n] \), \( D \in \cD_1(\cX) \setminus \Psi_{ \from V} \) 
and \( x \in V_1(D) \) is at most
\( 10| \cD_1(\cX) | n / ( \log^{100} n) \).  It follows that
\begin{equation}
\label{eq:v1}
\left| \left\{ x \in [n]: \left|  \left\{ D \in \cD_1(\cX) \setminus \Psi_{\from V} : 
x \in V_1(D) \right\} \right| \ge \frac{|\cD_1(\cX)|}{ \log^{99} n} \right\} \right|
\le  \frac{10n}{ \log n}. 
\end{equation}
Similarly, we have
\begin{equation}
\label{eq:v2}
\left| \left\{ x \in [n]: \left|  \left\{ D \in \cD_1(\cX) \setminus \Psi_{\to K} : 
x \in V_2(D) \right\} \right| \ge \frac{|\cD_1(\cX)|}{ \log^{99} n} \right\} \right|
\le  \frac{n}{ \log n}. 
\end{equation}
Now we turn to the set of vertices \(x \in [n]\) with the 
property that \(x \in V_3(D) \) for
many digraphs \(D\).  Consider a fixed digraph 
\(D \in \cD_1(\cX) \setminus \Psi_{\from V} \).  The number of ordered pairs 
\( (a,b) \) where \(a \in V \),
\(b \in I_\cX \) and there is a shortest path of length \( \ell \) 
from \( a \) to \( b \) is at most
\[
\frac{10n}{ \log^{100} n} \cdot \r_{\rm in} 
+ | I_\cX | \cdot \pi_{\rm in}
\le \frac{ n \log^{10} n}{ \log n } \cdot ( \log \log n )^2  
\]
for \(n\) sufficiently large.
It follows that \( |V_3(D) | \le 2 n ( \log \log n)^4 
/( \log^{1/2} n ) \).  Therefore
\begin{equation}
\label{eq:v3}
\left| \left\{x \in [n] : \left|  \left\{ D \in \cD_1(\cX) \setminus \Psi_{\from V} : 
x \in V_3(D) \right\} \right| \ge \frac{|\cD_1(\cX)|}{ \log^{1/5} n} \right\} \right|
\le  \frac{n}{ \log^{1/5} n}. 
\end{equation}
To prove
Lemma~\ref{lem:uni} it suffices to show
\begin{equation*}
%\label{eq:enough}
\left| \left\{ x \in [n] \setminus (I_\cX \cup O_\cX ) :  
\frac{ |\cD_x| }{ | \cD_1(\cX) |} \le \frac{1}{  4000 (\log \log n)^3 } \cdot \frac{k}{n}  \right\} \right| 
= o(n).
\end{equation*}
%As it suffices to establish (\ref{eq:enough}), 
We may therefore restrict our attention to vertices \(x\) in the complements of the sets 
in equations (\ref{eq:v1}), (\ref{eq:v2}) and (\ref{eq:v3}).

Now apply (\ref{eq:sum}).  We have
\begin{multline*}
| \cD_x| \pi_{\rm in} + \frac{ |\cD_1(\cX)|}{ \log^{99} n } \r_{\rm in} + 
\frac{ n^2 |\cD_1(\cX)|}{ e^{ \log^2n}} \D_0^\ell \ge \sum_{D \in \cD_x} d_\Sigma(D) \\
= \sum_{D \in \overline{ \cD_x}}
d_\Sigma(D) 
\ge \left( |\cD_1(\cX)| - |\cD_x| - \frac{ | \cD_1(\cX)|}{ \log^{99} n} - \frac{ |\cD_1(\cX)|}{ \log^{1/5} n} \right)
\frac{\pi_{\rm out}}{2}
\end{multline*}
Therefore
\[ | \cD_x | \left( \pi_{\rm in} + \frac{\pi_{\rm out}}{2} \right) \ge
| \cD_1(\cX) |  \frac{ \pi_{\rm out}}{2} 
\left( 1 - \frac{1}{ \log^{99} n} - \frac{1}{ \log^{1/5}n} - 
\frac{ 2 \r_{\rm in}}{ \pi_{\rm out} \log^{99}n} - \frac{ 2n^2 \D_0^\ell}{ \pi_{\rm out} e^{\log^2 n}} \right) \]
and
\[ \frac{ | \cD_x|}{ | \cD_1(\cX)| } \ge \frac{ \pi_{\rm out}}{ 4 \pi_{\rm in}} = \frac{1}{  4000 ( \log \log n)^3 } 
\cdot \frac{k}{n}. \]

%
%Therefore
%\[ | \cD_x\ \l_0 + \e | \cX| ( \D)^\ell \ge  ( | \cX| - | \cD_x| - \e |X| ) \l_1 \]
%Therefore
%\[ \frac{ | \cD_x|}{ |\cX|} \left( 1 + \frac{ \l_1}{ \l_0} \right) \ge \frac{ \l_1}{ \l_0}
%- \e( \D)^\ell - \e \frac{ \l_1}{ \l_0 } \]
%Lemma~\ref{lem:uni} follows.
%
%
%{\bf Note: The \( \l_1 \) should be replaced by \( \l_1/2 \) as some of the elements of
%\(K\) may have been used. }
%

%%%%%%%%%%%%%%%%%%%%%%%%%%%%%%%%%%%%
%%%%%%%%%%%%%%%%%%%%%%%%%%%%%%%%%%%%

\subsection{Proof of Lemma~\ref{lem:structs}}
\label{sec:sub}

We begin with the event \( \Psi_{\from V} \).  Let \(x\) be a fixed vertex.
For \(i =1, \dots, \ell \) let
\( S_i = S_i(x) \) be the set of vertices
that can reach \(x\) in a path of length \( i \) but 
not via a shorter path.  We reveal the sets
\(S_i \) iteratively (i.e. using breadth first search).  
%Let \( i_0 = i_0(x) \) be the smallest
%index such that
%\[ |S_{i_0}| \ge log^10 n \]
%(set \( i_0 = \infty \) if not such index exists).

\begin{claim} 
\label{cl:ain}
If \( f(n) \) is an arbitrary positive real function then
\[ Pr\left( \left|S_{\ell} \right| \ge  f(n) \tau^{\ell} \wedge \G_D \not\in \cE_G \right) 
\le e^{ 1/2-f(n)/10}  \]
for \(n\) sufficiently large.
\end{claim}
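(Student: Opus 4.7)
The plan is to analyze a reverse BFS from $x$ level by level, using independence of out-arc choices across different vertices in $\Omega_k$ to get per-level Chernoff-type bounds, and then to combine these bounds via stochastic domination by a Galton--Watson branching process.

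First, I would reveal $S_0=\{x\}, S_1,\ldots, S_\ell$ in BFS order and set $T_i=\bigcup_{j\le i}S_j$. Conditioning on $K$ (so that each $d_v\in\{2,3\}$ is deterministic) and on the BFS history through level $i-1$, the event $\{v\in S_i\}$ for $v\notin T_{i-1}$ is measurable in $v$'s own out-arcs. These arcs are independent across vertices in $\Omega_k$, and the only information revealed about $v$'s arcs is that none lies in $T_{i-2}$. So each of $v$'s arcs is conditionally uniform on $V\setminus T_{i-2}$, yielding
\[
p_v := \Pr(v\in S_i\mid\text{history}) \le 1-\Bigl(1-\tfrac{|S_{i-1}|}{n-|T_{i-2}|}\Bigr)^{d_v} \le \tfrac{d_v|S_{i-1}|}{n-|T_{i-2}|},
\]
with $\{\mathbf{1}[v\in S_i]\}_{v\notin T_{i-1}}$ conditionally independent.

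Second, using $\sum_v d_v=3n-k=\tau n$ together with the deterministic bound $|T_\ell|\le \D_0^{\,\ell}=(\log n)^{O(\log\log n)}=o(n)$ (which holds on $\{\G_D\not\in\cE_G\}$ since $\D(\G_D)<\D_0$), one gets
\[
\sum_{v\notin T_{i-1}} p_v \le \tau|S_{i-1}|(1+o(1)).
\]
The standard MGF bound for sums of independent Bernoullis then gives, for any $\lambda\ge 0$,
\[
\Ex\bigl[e^{\lambda|S_i|}\mid\text{history}\bigr] \le \exp\bigl(\tau|S_{i-1}|(e^\lambda-1)(1+o(1))\bigr),
\]
so $|S_i|$ is stochastically dominated by $\mathrm{Poisson}(\tau|S_{i-1}|(1+o(1)))$. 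Iterating, $|S_\ell|$ is dominated by the $\ell$-th generation size of a Galton--Watson branching process with offspring distribution $\mathrm{Poisson}(\tau(1+o(1)))$.

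Third, the tail bound follows from Markov's inequality applied to $s^{|S_\ell|}$ with $s=1+\tfrac{1}{10\tau^\ell}$. Writing $\phi(t)=\exp(\tau(t-1))$ for the offspring PGF and $\phi_j$ for its $j$-fold composition, the expansion $\phi(1+\eps)=1+\tau\eps+O(\eps^2)$ implies that $\phi_j(s)-1$ approximately multiplies by $\tau$ at each step, with relative second-order errors of size $O(1/\tau^{\ell-j})$ that form a convergent geometric series in $1/\tau$. A short computation then shows $\phi_\ell(s)\le e^{1/2}$ for $n$ sufficiently large, giving
\[
\Pr(|S_\ell|\ge f(n)\tau^\ell)\ \le\ \frac{\phi_\ell(s)}{s^{f(n)\tau^\ell}}\ \le\ e^{1/2}\cdot e^{-f(n)/10}\ =\ e^{1/2-f(n)/10}.
\]

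The main technical obstacle is the careful propagation of the $(1+o(1))$ factor from the BFS mean bound and the $O(\eps^2)$ term from the PGF iteration through the $\ell=\Theta(\log\log n)$ levels. Both error sources decay geometrically---the first because $|T_\ell|/n$ is negligible, and the second because the argument $s-1$ shrinks by a factor of $\tau$ per level when read backward---so they can be absorbed into the constants $1/2$ and $1/10$, but tuning them against the Markov parameter to land exactly on the stated bound is the most delicate bookkeeping in the argument.
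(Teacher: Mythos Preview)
Your approach is essentially the paper's: both dominate $|S_{i+1}|$ given $|S_i|$ by a Binomial/Poisson with mean $\approx \tau|S_i|$, iterate an exponential-moment bound level by level with parameter shrinking by a factor $\tau$ per step, and apply Markov at level $\ell$---the paper runs the $\lambda$-recursion backward from $\lambda_0=1/2$ to obtain $\lambda_\ell\ge 1/(10\tau^\ell)$, while you phrase the same computation as iterating the Poisson PGF forward from $s-1=1/(10\tau^\ell)$. The paper makes the intersection with $\{\G_D\notin\cE_G\}$ rigorous via an explicit stopped process $Y_i$ (equal to $|S_i|$ while all $|S_j|\le\sqrt n/\ell$, deterministic thereafter), which is precisely the device needed to justify your use of $|T_\ell|\le\Delta_0^\ell$ inside the conditional MGF computation.
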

\begin{proof}
Note that \( \G_D \not\in \cE_G \) implies $\D(G)\leq \D_0 = \frac{ 2 \log n}{ \log \log n} $ and so
\[ \sum_{j=0}^{\ell} |S_i| \le \sum_{j=0}^{\ell} \D_0^j \le (\ell+1) \D_0^{\ell} 
= n^{o(1)}. \]
Thus, we may discard the set of digraphs for which there exists an index \(1 \le i \le \ell\) such that
\( |S_i| > \sqrt{n} / \ell \).  In other words,
we iteratively condition on the event \( |S_i| \le \sqrt{n} / \ell \).

Now an observation.  If \( 0 < \l < 1 \) and \( \sum_{j=0}^i |S_j| \le \sqrt{n} \) then 
we have
\begin{equation*}
\begin{split}
E[ e^{\l |S_{i+1}|} \mid  |S_i| ] & \le \sum_{m=0}^{\t n} \binom{\t n}{m} 
\bfrac{ |S_i|}{ n -  \sqrt{n}}^m \left(1 - \frac{ |S_i|}{n - \sqrt{n}}\right)^{\t n-m} e^{\l m} \\
& = \left(1 - \frac{ |S_i|}{ n - \sqrt{n}} + \frac{ |S_i| e^\l}{ n - \sqrt{n}} \right)^{\t n} \\
& \le \left( 1 + \l(1+\l) \frac{ |S_i|}{ n - \sqrt{n}} \right)^{\t n} \\
& \le e^{ \t |S_i| \l( 1 + \l)( 1 + 2/\sqrt{n})}.
\end{split}
\end{equation*}
Now set \( \l_0 = 1/2 \).  
For \( i =1, \dots, \ell \) we define \( \l_i \) by setting
\[ \l_{i-1} = \t \l_i ( 1 + \l_i)( 1 + 2 / \sqrt{n}). \]
Note that so long as \( \l_i > \t/ n^{1/4} \) then we have
\[ \frac{ \l_i}{\t} \left( 1 - \frac{ \l_i}{\t} \right) \le \l_{i+1} \le   \frac{ \l_i}{\t}.   \]
Then inductively, so long as \( \l_j > \t/ n^{1/4} \), we have \( \l_j \le 1/( 2 \t^j) \) and 
\begin{multline*}
\l_j \ge \frac{ \l_0}{t^j} \prod_{m=0}^{j-1} \left( 1 - \frac{ \l_m}{\t} \right) \ge 
\frac{ \l_0}{\t^j} \prod_{m=0}^{j-1} \left( 1 - \frac{1}{2 \t^{m+1}} \right) \\
\ge  \frac{ \l_0}{\t^j} \exp \left\{ 1 - \sum_{m=0}^{j-1} \frac{1}{ \t^{m+1}} \right\}
\ge \frac{ \l_0}{\t^j} \exp \left\{ \frac{ \t}{ \t-1} \right\}
\ge \frac{1}{10 \t^j}.
\end{multline*}
(Note that we use the inequality \( 1- x > e^{-2x} \) for \( x \le 1/2 \).)  In particular, we have
%Note that so long as $\l_i\geq 2/\sqrt{n}$, we inductively have \( \l_i \ge \l_{i-1}/\t \)\mstar.  Therefore,
%after verifying that $1/2\t^\ell\geq 2/\sqrt{n}$, we see that 
\begin{equation}
\label{eq:lam}
\l_{\ell} \ge \frac{1}{10 \t^{\ell}}.
\end{equation}
We now define a random variable \(Y_i\). If \( |S_1|, |S_2|, \dots, |S_{i-1}| \le \sqrt{n}/\ell \)
then set \( Y_i = |S_i| \).  If, on the other hand, there exists \(j < i \) such that 
\( |S_{j}| > \sqrt{n}/\ell \) then set \( Y_i = \l_{i-1} Y_{i-1}/ \l_i \).  Let \( \cB_i \)
be the event that there exists \(j < i \) such that 
\( |S_{j}| > \sqrt{n}/\ell \).
For \( i =1, \dots, \ell \) we have
\begin{multline*}
E\left[ e^{\l_i Y_i} \mid \overline{ \cB_i} \right] = E \left[ E \left[ e^{\l_i |S_i|} \mid |S_{i-1}| \right] 
\mid \overline{\cB_i} \right] \\
\le E \left[ e^{\t |S_{i-1}| \l_i(1- \l_i)( 1 + 2/\sqrt{n})}  \mid \overline{\cB_i} \right]
= E \left[ e^{ \l_{i-1} |S_{i-1}|} \mid \overline{\cB_i} \right] 
= E \left[ e^{ \l_{i-1} Y_{i-1}} \mid \overline{\cB_i} \right]. 
\end{multline*}
Of course, the same inequality holds if we condition instead on \( \cB_i \).
It follows that
\( E \left[ e^{ \l_{\ell} Y_\ell } \right] \le E \left[ e^{ \l_0 Y_0 } \right] = e^{1/2} \).  Applying 
Markov's inequality and (\ref{eq:lam}) we have
%\[ Pr\left( S_{\ell} \ge \tau^{\ell} 1000 \log \log n \right) 
%\le E \left[ e^{\l_{\ell} S_{\ell}} \right] e^{ - \l_{\ell} \t^{\ell} 1000 \log \log n} 
%\le \frac{e^{1/2}}{ \log^{500}n }. \]
%Furthermore,
\[ Pr\left( Y_\ell \ge f(n) \tau^{\ell}  \right) 
\le E \left[ e^{\l_{\ell} Y_\ell} \right] e^{ - \l_{\ell} f(n) \t^{\ell} } 
\le e^{\frac{1}{2} -  \frac{1}{10} f(n)}. \]
As the event \( |S_\ell| \neq Y_\ell \) is a subset of \( \G_D \in \cE_G \), the claim follows.
\end{proof}

Let \(X\) be the set of vertices \(x\) such that 
\( | \Pi_{D, x \from V}| = 
|S_{\ell}(x)| \ge 1000 \log \log n \cdot \log^{10} n \).  Applying Claim~\ref{cl:ain} 
we have
\begin{equation*}
\begin{split}
\Pr(x \in X \wedge \G_D \not\in \cE_G) 
%& = Pr\left( |S_{\ell}(x)| \ge 1000 \log \log n \cdot \log^{10} n   
%\wedge \G_D \not\in \cE_G\right) \\
& \le \frac{e^{1/2}}{ \log^{100} n}.
\end{split}
\end{equation*}
Therefore, \( E[|X| \mid \G_D \not\in \cE_G ] \le 2n/ \log^{100} n \) and \( E[|X|] \le 3n/ \log^{100}n\) (as
\( \Pr(\G_D \in \cE_G) = o(1/\sqrt{n}) \) and \( |X| \le n \)) .  We bound the probability that \(X\) 
is significantly larger than its expected value by applying the Azuma-Hoeffding inequality to the
arc-exposure martingale on $\Omega_K$. (This is an instance of the first general setting in Section 7.4 of Alon and Spencer
\cite{AS}.) We fix $K$ and expose the out-arcs one at a time.  Let \( a_i \) be the \( i^{\rm th} \)
arc in some ordering of the $3n-k$ out-arcs.  Set
\( X_i = E[ |X| \mid a_1, a_2, \dots, a_i ] \).
Note that if \(D\) and \( D^\prime \) differ by a single out-arc then we have
\[  \left| |X(D)| - |X(D^\prime)| \right| < \sum_{i=0}^\ell 3^{i} \le \log^{11} n. \]
It follows that \( |X_{i+1} - X_i| \le \log^{11} n \) for \( i \ge 0 \).  The Azuma-Hoeffding inequality then
implies 
\[ Pr \left( X \ge \frac{10 n}{ \log^{100}n} \right) 
\le Pr\left( X \ge E[X] + \frac{n}{ \log^{100} n } \right) \le e^{ -n^{1-o(1)} } . \]

To bound the probability that there exists a vertex x such that \( |\Pi_{D,x\from V}| \ge \r_{\rm in} \) 
we apply Claim~\ref{cl:ain} with \( f(n) = \log^{10} n\).
It follows that
\( \Pr( |S_\ell| \ge \r_{\rm in} \wedge \G_D \not\in \cE_G ) \le e^{ -(\log^{10} n)/10} \).

Now we turn to the event \( \Psi_{\to K} \). Let \(y\) be a fixed vertex.
Let \( T_i = T_i(y) \) 
be the sum of the out-degrees of the set of 
vertices that can be reached from \( y \) via directed paths of length \(i\) but not 
via shorter paths (i.e. \( T_0 = 2\text{ or }3 \)).  As above, we reveal \(T_i\) iteratively.  
Recall that \( \ell = \rdup{10 \log_\t \log n} \).  Set
\( \ell_1 = \ell -1 - \log_2( 1000 \log \log n) \).
\begin{claim}
\label{cl:out}
\[ Pr\left( T_{\ell-1} \le \t^{\ell_1}  \right)
\le \frac{1}{ \log^{2000} n }. \]
\end{claim}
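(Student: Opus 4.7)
\textbf{Plan for Claim~\ref{cl:out}.} I would reveal the BFS tree rooted at $y$ one generation at a time: let $S_i$ be the set of vertices at directed distance exactly $i$ from $y$, so that $T_i = \sum_{v\in S_i} d^+_D(v)$. Since $\tau^\ell = O(\log^{10} n)$ crudely bounds the total size of the tree through generation $\ell$, the probability that any explored out-arc in the first $\ell$ generations lands on a previously revealed vertex is at most $O(\tau^{2\ell}/n) = O(\log^{20} n/n)$, negligible compared to $1/\log^{2000} n$. Condition on this no-collision event; then each fresh vertex in $S_{i+1}$ is independently in $K$ with probability $p(1+o(1))$, where $p = k/n = \log^{-1/2} n$, so $T_{i+1}\mid T_i$ is distributed as $3T_i - X_i$ with $X_i\sim \Bin(T_i,p)$. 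Consequently $\tilde T_i := T_i/\tau^i$ is a nonnegative martingale starting at $\tilde T_0 = T_0 \ge 2$, and $T_i \ge 2^{i+1}$ holds deterministically from the minimum out-degree $2$.

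Setting $\eta = \log_2(1000\log\log n)$, the target event $T_{\ell-1}\le\tau^{\ell_1}$ is equivalent to $\tilde T_{\ell-1}\le\tau^{-\eta}$. Write $B_j := X_j/T_j - p$, a martingale difference with conditional mean $0$ and conditional variance $p(1-p)/T_j$. Taking logs,
\[ \log \tilde T_{\ell-1} - \log \tilde T_0 \;=\; \sum_{j=0}^{\ell-2} \log\!\Bigl(1-\frac{B_j}{\tau}\Bigr), \]
so $\tilde T_{\ell-1}\le\tau^{-\eta}$ forces $\sum_{j} B_j/\tau \ge \eta\log\tau + \log(T_0\tau^\eta) = \Theta(\log\log\log n)$. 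The cumulative conditional variance is
$V = \sum_{j} p(1-p)/(\tau^2 T_j) \le (p/\tau^2)\sum_j 2^{-(j+1)} = O(p) = O(\log^{-1/2} n)$,
and the maximum step $|B_j/\tau|$ is at most $1/\tau$. A Bennett-type martingale inequality (Freedman's inequality in its Bennett form) then gives, for $t$ much larger than $V$,
\[ \Pr\Bigl(\sum_{j} B_j/\tau \ge t\Bigr) \;\le\; \exp\!\bigl(-\Omega(t\,\log(t/p))\bigr), \]
which for $t = \Theta(\log\log\log n)$ and $p = \log^{-1/2}n$ evaluates to $\exp(-\Omega(\log\log\log n\cdot\log\log n)) \le 1/\log^{2000} n$ once $n$ is large enough that $\log\log\log n$ exceeds a suitable constant multiple of $2000$.

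The main obstacle is justifying the Bennett-type estimate despite the fact that in the earliest generations the step $|B_j|$ can be as large as $1-p$ (when the whole small Binomial lands in $K$), while in later generations $|B_j|$ is of the much smaller typical order $\sqrt{p/T_j}$. The delicacy is that Bennett's inequality uses the worst-case step bound $M = 1/\tau$ uniformly, so the resulting exponent is the large-deviation form $\Omega(t\log(t/p))$ rather than the Gaussian form $t^2/V$. Here this is exactly what we need: because $V = O(p)$ is tiny, the large-deviation regime $t\gg V$ is the relevant one, and the factor $\log(t/p)\approx \tfrac{1}{2}\log\log n$ is large enough to push the bound down to $1/\log^{2000}n$. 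Verifying the variance bound $V = O(p)$ requires the deterministic lower bound $T_j \ge 2^{j+1}$, which is the key use of the no-collision conditioning.
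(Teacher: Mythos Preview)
Your approach has a genuine gap at the step where you pass from $\sum_j \log(1-B_j/\tau) \le -(\eta\log\tau + \log T_0)$ to $\sum_j B_j/\tau \ge \Theta(\log\log\log n)$. The elementary inequality $\log(1-x) \le -x$ holds for every $x<1$, so it only yields $\sum_j \log(1-B_j/\tau) \le -\sum_j B_j/\tau$; this points the wrong way and does not force $\sum_j B_j/\tau$ to be large. To get a usable lower bound you need $\log(1-x) \ge -x - x^2$ (valid for $|x|\le 1/2$), which gives only $\sum_j B_j/\tau + \sum_j B_j^2/\tau^2 \ge c$. You would then also have to prove $\Pr\bigl(\sum_j B_j^2/\tau^2 \ge c/2\bigr) \le 1/\log^{2000} n$. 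This second–order sum has $\ell-1 = \Theta(\log\log n)$ terms, each bounded by $1/\tau^2$, so its deterministic upper bound is $\Theta(\log\log n) \gg c$; controlling it probabilistically can be done (a second Bennett argument applied to $B_j^2 - p(1-p)/T_j$, with the same cumulative variance $O(p)$), but it is a substantive extra step you do not mention.

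Even after this repair, your final estimate is $\exp\bigl(-\Omega(\log\log n \cdot \log\log\log n)\bigr)$, which beats $1/\log^{2000}n$ only once $\log\log\log n$ exceeds a large absolute constant; you acknowledge this, but it makes the claim hold only for triply–exponentially large $n$. The paper sidesteps both issues by working directly with the Laplace transform of $T_i$: it builds a decreasing sequence $\lambda_0 = 1000\log\log n > \lambda_1 > \cdots$ so that $E\bigl[e^{-\lambda_{i+1}T_{i+1}}\mid T_i\bigr] \le e^{-\lambda_i T_i}$ at every step (taking $\lambda_{i-1}=2\lambda_i$ while $\lambda$ is still large, then $\lambda_{i-1}=\lambda_i(\tau-6\lambda_i)$ once $\lambda$ is small). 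This immediately gives $E\bigl[e^{-\lambda_{\ell-1}T_{\ell-1}}\bigr]\le e^{-\lambda_0 T_0}\le 1/\log^{2000}n$, and a single application of Markov finishes. No log decomposition or second–order correction is needed, and the bound holds for ordinary ``$n$ large'' rather than astronomical $n$.
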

\begin{proof}
We begin with an observation.
Define \( \g_i = 3^{i+2}/(2n) \).  Note that the probability that an out-arc from \(T_i\) hits a vertex 
that has already been seen is bounded above by \( \g_i \).  Then
\begin{equation}
\label{eq:base}
E\left[ e^{ -\l |T_{i+1}|} \mid |T_i| \right] \le  
\left( \g_i + \a e^{-2 \l} + ( 1 - \a - \g_i) e^{-3 \l} \right)^{|T_i|}. 
\end{equation}
We apply (\ref{eq:base}) in two ways.  First, if \( \l > 0 \) and 
\( \l e^{-3\l} \ge 2 \g_i \) then on replacing the first $\g_i$ by $\l e^{-3\l}/2$ and dropping the second $\g_i$ we see that
\[ \g_i + \a e^{-2 \l} + (1- \a - \g_i) e^{-3\l} \le e^{-2 \l} \]
and so
\begin{equation*}
\begin{split}
E\left[ e^{ -\l |T_{i+1}|} \mid |T_i| \right] \le e^{ -2 \l |T_i|} .
\end{split}
\end{equation*}
Furthermore, if \( 3 \g_i \le \l \) then we have
\begin{equation*}
\begin{split}
E \left[ e^{ -\l |T_{i+1}|} \mid |T_i| \right] 
%& 
%\le  
%\left( \g + ( \a - \g) e^{-2 \l} + ( 1 - \a - \g) e^{-3 \l} \right)^{|T_i|} \\
& \le  
\left( \g_i + \a(1 - 2\l + 3\l^2) + ( 1 - \a - \g_i) (1 -3\l + 5\l^2) \right)^{|T_i|} \\
& \le ( 1 - \l (\t - 6\l))^{|T_i|}\\
& \le e^{ -\l(\t - 6 \l)|T_i|} .
\end{split}
\end{equation*}
Set \( \l_0 = 1000 \log\log n \) and \( i_0 = \ell - \ell_1 +3 \).  
For \( i =1 ,\dots, \ell-1 \) define \( \l_i \) by
\[ \l_{i-1} = \begin{cases}
2 \l_i & \text{ if } i \le i_0 \\
\l_i( \t - 6\l_i) & \text{ if } i >  i_0
\end{cases} \]
(with the additional condition \( \l_i < \t/12 \) in the latter case).
Of course, we make these choices in order to establish
%\[ E[ e^{-\l_{i+1} |T_{i+1}|} ] = E[ E[ e^{-\l_{i+1} |T_{i+1}| } \mid |T_i|]]
%\le \begin{cases}
%E[ e^{- 2 \l_{i+1} |T_i|} ] & \text{ if } i \le \log_2 [ C \log \log n] \\
%E[ e^{ -l_{i+1}(\t - 6 \l_{i+1}) |T_i|} ] & \text{ if } i >  \log_2 [ C \log \log n].
%\end{cases} \]
\(  E[ e^{-\l_{i+1} |T_{i+1}|} ] \le E[ e ^{ - \l_i |T_i|} ] \) for \( i =0, \dots, \ell-1\) and 
therefore
\[ E[ e^{-\l_{\ell-1} |T_{\ell-1}|} ] \le E [ e^{-\l_0 |T_0|} ] = \frac{1}{ \log^{3000} n}. \]
Applying Markov's inequality we have that for any $s>0$,
\[ Pr[ |T_{\ell-1}| \le s \t^{\ell-1}] \le  e^{\l_{\ell-1} s \t^{\ell-1}} E [ e^{-\l_{\ell-1} |T_{\ell-1}|} ] 
\le \frac{ e^{\l_{\ell-1} s \t^{\ell-1}}}{ \log^{3000}n}. \]
We now establish an upper bound on \( \l_{\ell-1} \).  Note first
that $\l_{i_0}=\frac{1}{16}$ and $\l_{i_0+1}\leq \frac{1}{32}$. Then
observe that $i>i_0$ and $\l_i\leq \frac1{24}$ implies 
\begin{equation}
\label{eq:upper}
  \l_i \le \frac{\l_{i-1}}{\t} \left( 1 + \frac{ 8 \l_{i-1}}{\t} \right). 
\end{equation}
Furthermore, we have \( \l_{i} \le \l_{i-1}/2 \) for all \(i\) and the bound
\[ \prod_{j} \left( 1 + \frac{8 \l_j}{ \t} \right) \le 
\exp \left\{ \sum_j \frac{8 \l_j}{ \t} \right\} \le e^{16/\t} \le e^{6}, \]
then implies
\[ \l_{\ell-1} \le 2^{-4} \t^{ -\ell_1 + 4} e^6. \]
Taking \(s = \t^{- \ell-1 + \ell_1} 
\), 
we have
\[ Pr \left( |T_{\ell-1}| \le \tau^{\ell_1} \right) \le 
\frac{ e^{ e^6 (\t/2)^{4}}}{ \log^{3000}n}.\]   
\end{proof}

Now let \(Y\) be the set of vertices \(y\) such that 
\( T_{\ell-1}(y) \le \t^{\ell_1} \).  It follows from Claim~\ref{cl:out} that 
we have \( E[|Y|] \le n/ \log^{2000} n \).  We again apply the Azuma-Hoeffding
inequality to the arc exposure martingale on $\Omega_K$ to show that the \(|Y|\) is 
concentrated around its expected value.  Recall that \( a_i \) denotes the \( i^{\rm th} \)
arc observed in this process.  Set
\[ Y_i = \begin{cases}
E[ |Y| \mid a_1, a_2, \dots, a_i ] & \text{ if }  Pr\left( \D( \G_D) \ge \log^3 n \mid  a_1, \dots a_{i-1} \right) 
< \frac{1}{n^2} \\
Y_{i-1} & \text{ if }\exists j<i:   Pr\left( \D( \G_D) \ge \log^3 n  \mid  a_1, \dots a_j \right) \ge \frac{1}{n^2}
. 
\end{cases}
\]
In order to bound the one step changes in this sequence of random variables, we can restrict our attention to 
the situation where $Pr\left( \D( \G_D) \ge \log^3 n \mid  a_1, \dots a_{i-1} \right) 
< \frac{1}{n^2}$. First note that if \(D\) and \( D^\prime \) differ by a single out-arc 
and \( \D(\G_D), \D( \G_{D^\prime} ) < \log^3 n\) 
then we have
\[  \left| |Y(D)| - |Y(D^\prime)| \right| < (\log^3 n)^{\ell-1} < n^{1/10}. \]
To deal with pairs $D,D'$ where \( \D(\G_D)\) or \( \D( \G_{D^\prime} ) \geq \log^3 n\) we note 
\[ Pr \left(   \D( \G_D) \ge \log^3 n \mid  a_1, \dots a_{i} \right) < n  
Pr\left(  \D( \G_D) \ge \log^3 n \mid  a_1, \dots a_{i-1} \right)<1/n \] 
and \( |Y| \le n \).  Putting it all together we have
\( |Y_{i+1} - Y_i| \le n^{1/10}+n/n\le 2 n^{1/10} \).  
The Azuma-Hoeffding inequality then implies
\[ Pr\left( Y_{3n-k} \ge E[|Y|] + \frac{n}{ \log^{200} n} \right) \le e^{ - n^{8/10-o(1)}}. \]
Noting that
\begin{equation*}
\begin{split}
\Pr( Y_{3n-k} \neq |Y|) & \le n^2 \Pr( \D( \G_D) > \log^3 n) \\
& < n^2 \cdot n \binom{ 3n}{ \log^3n -3} 
\bfrac{1}{n}^{\log^3n -3} \\ & \le e^{-\log^3 n}, 
\end{split}
\end{equation*}
we conclude that
\begin{equation}\label{hhh}
Pr \left( |Y| \ge n/ ( \log^{150} n) \right) \le e^{- (\log^3 n)/2}.
\end{equation}
Suppose now that $x\notin Y$. Then 
\begin{equation}\label{hh}
\Pi_{D,x\to K}\ dominates\  Bi(\t^{\ell_1},k/n)-Bi(3^{\ell},3^{\ell}/n).
\end{equation}
The first binomial in \eqref{hh} is the number of edges from $T_{\ell-1}(y)$ to $K$ and the second
binomial dominates the number of vertices chosen twice by these edges. The lemma now follows from
\eqref{hhh}, \eqref{hh} and simple bounds on the binomials:
$$\Pr(Bi(\t^{\ell_1},k/n)\leq \t^{\ell_1}k/2n)\leq e^{-\Omega(\log^9n)};\; \Pr(Bi(3^{\ell},3^{\ell}/n)\geq \log^3n)\leq e^{-\Omega(\log^4n)}.$$
\proofend

\vskip5mm

\noindent
{\bf Final Remark.}  We have established that \( G_{\rm 3-out} \) is 
Hamiltonian with high probability.  In general, it may be the case if \( m \ge 3 \) then
with high probability \( G_{m-{\rm out}} \) contains \( \lfloor m/2 \rfloor \) disjoint
Hamilton cycles plus, in the case that $m$ is odd, a disjoint matching that saturates all but at most 
one vertex.  We leave this as an open problem.

\vskip5mm
{\bf Acknowledgment.}  The authors thank the anonymous referees for many useful comments.

%%%%%%%%%%%%%%%%%%%%%%%%%%%%%%%%%%%%%%%%%%%%%
%%%%%%%%%%%%%%%%%%%%%%%%%%%%%%%%%%%%%%%%%%%%%


\begin{thebibliography}{99}
\bibitem{AKS} M. Ajtai, J. Koml\'{o}s and E. Szemer\'{e}di, The first  
occurrence of Hamilton cycles in random graphs. {\em Annals of Discrete Mathematics} {\bf 27} (1985) 173-178.

\bibitem{AS} N. Alon and J. Spencer, The probabilistic method, 
Wiley, 1992.

%\bibitem{AFP} J. Aronson, A.M. Frieze and B.G. Pittel,
%Maximum matchings in
%sparse random graphs: Karp-Sipser re-visited, {\em Random Structures
%and Algorithms} 12 (1998) 111-178.

\bibitem{BoHam} B. Bollob\'{a}s, The evolution of sparse graphs, in
{\em Graph Theory and Combinatorics} (Proceedings of the  Cambridge Combinatorics  
Conference in Honour of Paul Erd\H{o}s (B. Bollob\'{a}s; Ed)), Academic Press (1984)
35-57.

%\bibitem{BolCon} B. Bollob\'{a}s, A probabilistic proof of an asymptotic  
%formula for the number of labelled regular graphs, {\em European Journal  
%on Combinatorics} 1 (1980) 311-316.

\bibitem{BF1} B. Bollob\'{a}s and A.M. Frieze, On matchings and hamiltonian cycles in random graphs.
{\em Annals of Discrete Mathematics} {\bf 28} (1985) 23-46.

\bibitem{BCFF} B. Bollob\'{a}s, C. Cooper, T.I. Fenner and A.M. Frieze,
On Hamilton cycles in sparse random graphs with minimum degree at least $k$,
{\em Journal of Graph Theory} 34 (2000) 42-59.

\bibitem{BFF} B. Bollob\'{a}s, T.I. Fenner and A.M. Frieze,
An algorithm for finding hamilton paths and cycles in
random graphs.  {\em Combinatorica} {\bf 7} (1987) 327-341

\bibitem{BFF1} B. Bollob\'{a}s, T.I. Fenner and A.M. Frieze,
Hamilton cycles in random graphs with minimal degree at least $k$,
in {\em A tribute to Paul Erd\H{o}s}, edited by A. Baker,
B. Bollob\'as and A. Hajnal (1990) 59 - 96.

\bibitem{CF} C. Cooper and A.M. Frieze, Hamilton cycles in random graphs and directed graphs.
{\em Random Structures and Algorithms} {\bf 16} (2000) 369-401.

\bibitem{CFR} C. Cooper, A.M. Frieze and B. Reed, Random regular
graphs of non-constant degree: connectivity and Hamilton cycles.
{\em Combinatorics, Probability and Computing} {\bf 11} (2002) 249-262.

%\bibitem{Dur} R.Durrett, Probability: Theory and examples,
%Wadsworth and Brooks/Cole, 1991.

\bibitem{FF0} T.I. Fenner and A.M. Frieze, On the connectivity of random $m$-orientable graphs and digraphs.
{\em Combinatorica} {\bf 2} (1982) 347-359.

\bibitem{FF} T.I. Fenner and A.M. Frieze, 
On the existence of hamiltonian cycles in a class of random graphs.
{\em Discrete Mathematics} {\bf 45} (1983) 301-305.

%\bibitem{Frieze} A.M. Frieze, Maximum matchings in a class of random graphs,
%{\em Journal of Combinatorial Theory Series B} 40 (1986) 196-212.

\bibitem{F} A.M. Frieze, Finding Hamilton cycles in sparse random graphs.
{\em Journal of Combinatorial Theory B} {\bf 44} (1988) 230-250.

%\bibitem{F1} A.M. Frieze, Perfect matchings in random bipartite graphs
%with minimal degree at least 2, {\em Random Structures and Algorithms}
%26 (2005) 319-358.

\bibitem{FL} A. Frieze and T. {\L}uczak, 
Hamiltonian cycles in a class of random graphs: one step further,
in {\em Proceedings of Random Graphs '87}, 
Edited by M.Karonski, J.Jaworski and A.Rucinski, John Wiley and Sons, 53-59.

%\bibitem{FP} A.M. Frieze and B.G. Pittel, Perfect matchings in random
%graphs with prescribed minimal degree, Trends in Mathematics,
%Birkhauser Verlag, Basel (2004) 95-132.
%\bibitem{GV} G.H. Golub and C.F. Van Loan, Matrix computations, 3rd
%Edition, The Johns Hopkins University Press, 1996.
%\bibitem{Karp} R.M. Karp and M. Sipser,
%Maximum matchings in sparse random graphs,
%{\em Proceedings of the  22nd Annual IEEE Symposium on Foundations of
%Computing}  (1981) 364-375.

\bibitem{KS} J. Koml\'os and E. Szemer\'edi,
Limit distributions for the existence of Hamilton circuits in a random
graph. {\em Discrete Mathematics} {\bf 43} (1983) 55-63.

\bibitem{KSVW} M. Krivelevich, B. Sudakov, V. Vu and N.C. Wormald,
Random regular graphs of high degree. {\em Random Structures and Algorithms}
{\bf 18} (2001) 346-363.
%\bibitem{LP} L. Lov\'asz and M.D. Plummer, Matching theory, Akad\'emi
%Kiad\'o, Budapest 1986.

\bibitem{Po} L. P\'osa,
Hamiltonian circuits in random graphs.
{\em Discrete Mathematics} {\bf 14} (1976) 359-364.

\bibitem{RW} R.W. Robinson and N.C. Wormald, Almost all regular
graphs are Hamiltonian. {\em Random Structures and Algorithms} {\bf 5}
(1994) 363-374.

\bibitem{Sch} A. Schrijver, Combinatorial optimization: polyhedra and
efficiency, Springer 2003.
\end{thebibliography}
\end{document}